\def\res{\rm{{res}}}
\newcommand\Inv{\rm{Inv}}
\def\fa{{\mathcal{F}}}
\def\O{{\mathcal{O}}}
\def\M{{\mathcal{M}}}
\def\D{{\mathcal{D}}}
\def\U{{\mathcal{U}}}
\def\B{{\mathcal{B}}}
\title[Stable singularities of  holomorphic vector fields]{Stable
singularities of  holomorphic vector fields}
\author{V. Le\'on}
\author{B. Sc\'ardua}
\address{V. Le\'on. ILACVN - CICN, Universidade Federal da Integração Latino-Americano, Parque tecnológico de Itaipu, Foz do Iguaçu-PR, 85867-970 - Brazil}
\email{victor.leon@unila.edu.br}
\address{B. Sc\'ardua. Instituto de Matem\'atica - Universidade Federal do Rio de Janeiro,
CP. 68530-Rio de Janeiro-RJ, 21945-970 - Brazil}
\email{scardua@im.ufrj.br}
\subjclass[2000]{Primary 37F75, 57R30; Secondary 32M25, 32S65.}
\date{}
\begin{document}

\begin{abstract}
 We consider germs of  holomorphic vector fields with an isolated singularity  at the origin $0\in\mathbb{C}^2$. We introduce a notion of stability, similar to  {\it  Lyapunov} stability. For such  a germ, called  {\it $L$-stable}  singularity, either the corresponding foliation admits a holomorphic first integral,  or it is a real logarithmic foliation singularity. A notion of $L$-stability is also naturally introduced for a leaf of a foliation. In the complex codimension one case, for holomorphic foliations, the holonomy groups of $L$-stable leaves are proved to be abelian, of a suitable type. This implies the existence of local closed meromorphic one-forms defining the foliation, in a neighborhood of $L$-stable leaves. \end{abstract}

\maketitle

\newtheorem{Theorem}{Theorem}[section]
\newtheorem{Corollary}{Corollary}[section]
\newtheorem{Proposition}{Proposition}[section]
\newtheorem{Lemma}{Lemma}[section]
\newtheorem{Claim}{Claim}[section]
\newtheorem{Definition}{Definition}[section]
\newtheorem{Example}{Example}[section]
\newtheorem{Remark}{Remark}[section]

\newcommand\virt{\rm{virt}}
\newcommand\SO{\rm{SO}}
\newcommand\G{\varGamma}
\newcommand\Om{\Omega}
\newcommand\Kbar{{K\kern-1.7ex\raise1.15ex\hbox to 1.4ex{\hrulefill}}}
\newcommand\codim{\rm{codim}}
\renewcommand\:{\colon}
\newcommand\s{\sigma}
\def\vol#1{{|{\bfS}^{#1}|}}

\def\fa{{\mathcal F}}
\def\H{{\mathcal H}}
\def\O{{\mathcal O}}
\def\P{{\mathcal P}}
\def\L{{\mathcal L}}
\def\C{{\mathcal C}}
\def\Z{{\mathcal Z}}

\def\M{{\mathcal M}}
\def\N{{\mathcal N}}
\def\R{{\mathcal R}}
\def\ea{{\mathcal e}}
\def\Oa{{\mathcal O}}
\def\ee{{\bfE}}

\def\A{{\mathcal A}}
\def\B{{\mathcal B}}
\def\H{{\mathcal H}}
\def\V{{\mathcal V}}
\def\U{{\mathcal U}}
\def\al{{\alpha}}
\def\be{{\beta}}
\def\ga{{\gamma}}
\def\Ga{{\Gamma}}
\def\om{{\omega}}
\def\Om{{\Omega}}
\def\La{{\Lambda}}
\def\ov{\overline}
\def\dd{{\bfD}}
\def\pp{{\bfP}}

\def\nn{{\mathbb N}}
\def\zz{{\mathbb Z}}
\def\bq{{\mathbb Q}}
\def\bp{{\mathbb P}}
\def\bd{{\mathbb D}}
\def\bh{{\mathbb H}}
\def\te{{\theta}}
\def\rr{{\mathbb R}}
\def\bb{{\mathbb B}}

\def\pp{{\mathbb P}}

\def\dd{{\mathbb D}}
\def\zz{{\mathbb Z}}
\def\qq{{\mathbb Q}}

\def\hh{{\mathbb H}}
\def\nn{{\mathbb N}}

\def\LL{{\mathbb L}}

\def\co{{\mathbb C}}
\def\qq{{\mathbb Q}}
\def\na{{\mathbb N}}
\def\esima{${}^{\text{\b a}}$}
\def\esimo{${}^{\text{\b o}}$}
\def\lg{\lambdangle}
\def\rg{\rangle}
\def\ro{{\rho}}
\def\lV{\left\Vert}
\def\rV{\right\Vert }
\def\lv{\left\vert}
\def\rv{\right\vert }
\def\Sa{{\mathcal S}}
\def\D{{\mathcal D  }}

\def\si{{\bf S}}
\def\ve{\varepsilon}
\def\vr{\varphi}
\def\lV{\left\Vert }
\def\rV{\right\Vert}
\def\lv{\left\vert }
\def\rv{\right\vert}
\def\Range{\rm{{R}}}
\def\vol{\rm{{Vol}}}
\def\ind{\rm{{i}}}

\def\Int{\rm{{Int}}}
\def\Dom{\rm{{Dom}}}
\def\supp{\rm{{supp}}}
\def\Aff{\rm{{Aff}}}
\def\Exp{\rm{{Exp}}}
\def\Hom{\rm{{Hom}}}
\def\codim{\rm{{codim}}}
\def\cotg{\rm{{cotg}}}
\def\dom{\rm{{dom}}}
\def\Sa{\mathcal{{S}}}

\def\VIP{\rm{{VIP}}}
\def\argmin{\rm{{argmin}}}
\def\Sol{\rm{{Sol}}}
\def\Ker{\rm{{Ker}}}
\def\Sat{\rm{{Sat}}}
\def\diag{\rm{{diag}}}
\def\rank{\rm{{rank}}}
\def\Sing{\rm{{Sing}}}
\def\sing{\rm{{sing}}}
\def\hot{\rm{{h.o.t.}}}

\def\Fol{\rm{{Fol}}}
\def\grad{\rm{{grad}}}
\def\id{\rm{{id}}}
\def\Id{\rm{{Id}}}
\def\sep{\rm{{Sep}}}
\def\Aut{\rm{{Aut}}}
\def\Sep{\rm{{Sep}}}
\def\Res{\rm{{Res}}}
\def\ord{\rm{{ord}}}
\def\h.o.t.{\rm{{h.o.t.}}}
\def\Hol{\rm{{Hol}}}
\def\Diff{\rm{{Diff}}}
\def\SL{\rm{{SL}}}


\section{Introduction and main results}

In this paper we address the subject of stability  for a singularity of a holomorphic   vector field  in complex dimension two. We adopt the viewpoint of the  theory of Ordinary Differential Equations. More precisely, we introduce a natural notion of stable singularity inspired in the notion of Lyapunov, taking into account the holomorphic character: existence of a separatrix (\cite{camacho-sad}) and nonexistence of bounded solutions. In the sequel we consider  the global framework, by the use of theory of Foliations.

 In dimension two, there is a natural connection between vector field singularities and germs of holomorphic foliations  (\cite{mattei-moussu},\cite{scarduaDynI}).  We shall therefore refer to a {\it germ of a holomorphic foliation} $\fa$ at the origin $0 \in \mathbb C^2$, as induced by  a  pair $(X,U)$ where $X$ is a   holomorphic vector field defined a neighborhood $U$ of the origin $0\in \mathbb C^2$, with an isolated  singularity at the origin $X(0)=0$. Recall that a {\it separatrix} is an invariant irreducible  analytic curve containing the singularity. Every  singularity admits a separatrix (\cite{camacho-sad}). Throughout  this paper we will only consider singularities with a finite number of separatrices, called {\it non-dicritical}. In this case,  for any small enough neighborhood $U$ of the origin, the set of separatrices $\mbox{Sep}(\fa)\cap U$ is a non-empty finite union of analytic curves all of them containing the origin. Inspired by Reeb's local stability theorem for foliations (\cite{C-LN}) and by the classical notion of stability (in the sense of Lyapunov) for the solutions of a real ordinary differential equations, we introduce the following notion.   We shall say the germ $\fa$ is {\it $L$-stable} if, it is non-dicritical and, for any neighborhood $W$ of $\mbox{Sep}(\fa)$ there is  a smaller neighborhood $\mbox{Sep}(\fa)\subset V\subset W$ whose saturate by  $\fa$ is contained in $W$. We refer to Definition~\ref{Definition:Lstablelocal} for further details.
We shall then characterize those germs of ($L$-stable) foliations. A germ $\fa$ of a  {\it real logarithmic foliation} if it is  given by a closed meromorphic one-form $\Omega=\sum\limits_{j=1} ^r \lambda _j df_j /f_j$, for some holomorphic $f_j\in \mathcal O_2$ and $\lambda_j \in \mathbb R, \forall j$.
Ou main result reads:

\begin{Theorem}
\label{Theorem:main}
Let $\fa$ be a germ of a  $L$-stable singularity at $0\in {\mathbb C} ^2$.
  Then we have two possibilities:
   \begin{enumerate}
   \item $\fa$ admits a holomorphic first integral.
   \item $\fa$ is a real logarithmic foliation  singularity.
        \end{enumerate}

\end{Theorem}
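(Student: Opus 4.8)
The plan is to transfer the whole question to the holonomy dynamics along the exceptional divisor of a resolution, and then to classify the stable holonomy that can occur.

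First I would apply Seidenberg's desingularization to replace $\fa$ by its reduced model $\tilde\fa$ on a surface $\tilde M$ obtained by finitely many blow-ups, with exceptional divisor $E=\bigcup_i E_i$. Since $\fa$ is non-dicritical, every component $E_i$ is $\tilde\fa$-invariant, and the strict transforms of the separatrices in $\Sep(\fa)$ meet $E$ transversally at reduced singular points. The first step is to check that $L$-stability is read faithfully upstairs: a neighborhood $W$ of $\Sep(\fa)$ pulls back to a neighborhood of $E\cup\widetilde{\Sep(\fa)}$, and the saturation condition downstairs is equivalent to the corresponding saturation condition for $\tilde\fa$ near $E$. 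Next I would translate $L$-stability into Lyapunov stability of the holonomy groups of the punctured components $E_i^{\ast}=E_i\setminus\Sing(\tilde\fa)$: fixing a transverse section $\Sigma$ at a regular point of $E_i$, a leaf meeting $\Sigma$ near $E_i$ and remaining in $W$ corresponds exactly to a holonomy orbit that stays bounded, and since leaves carry no preferred orientation the saturation condition forces both forward and backward orbits to remain near the fixed point. Hence each holonomy group $G_i\subset\Diff(\co,0)$ has $0$ as a two-sided Lyapunov-stable fixed point.

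Third comes the local classification at each reduced singularity $p\in E$. For a nondegenerate singularity with eigenvalue ratio $\lambda$, the local holonomy of a separatrix is tangent to $z\mapsto e^{2\pi i\lambda}z$; two-sided boundedness of its orbits forces $|e^{2\pi i\lambda}|=1$, i.e. $\lambda\in\rr$. For a saddle-node the holonomy of the strong separatrix is tangent to the identity (parabolic), whose Leau--Fatou flower always carries escaping orbits, contradicting stability; thus saddle-nodes are excluded and every singularity is nondegenerate with real eigenvalue ratio $\lambda\in\rr\setminus\qq_{>0}$. The main obstacle is the next point: upgrading pointwise stability to \emph{analytic linearization}. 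Here I would use that a germ with $|\lambda|=1$ and no escaping orbits cannot be parabolic of infinite order (again by the flower theorem) and must be analytically conjugate to the rotation $z\mapsto e^{2\pi i\lambda}z$; stability of the whole group then forces the generators to share a common linearizing coordinate, so each $G_i$ is abelian and linearizable, as announced in the abstract.

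Finally I would conclude by a dichotomy on the rotation numbers. If all the ratios $\lambda$ are rational, they are negative rationals, the linearized holonomies are periodic, the groups $G_i$ are finite, and Mattei--Moussu's criterion yields a holomorphic first integral, giving case (1). Otherwise the $G_i$ are abelian groups of linearizable maps with real (some irrational) rotation numbers; the transverse structure they provide patches, along the components of $E$ and across the reduced singular points, into a global closed meromorphic one-form $\Omega=\sum_j\lambda_j\,df_j/f_j$, with the $f_j$ cutting out the separatrices and $\lambda_j\in\rr$ the real residues dictated by the eigenvalue ratios. Blowing this form down produces the required real logarithmic one-form on $(\co^2,0)$, giving case (2).
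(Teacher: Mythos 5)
Your overall strategy coincides with the paper's: desingularize, show $L$-stability descends to the holonomy (in the paper, virtual holonomy) pseudogroups along the exceptional divisor, exclude saddle-nodes and non-real eigenvalue ratios, and then split into the fully-resonant case (finite holonomy, Mattei--Moussu first integral) versus the case with an irrational ratio (linearizable circle-type holonomy, closed logarithmic one-form with real residues \`a la Camacho--Lins Neto--Sad). So the architecture is sound and matches the paper's proof, which runs through Lemma~\ref{Lemma:zero}, Lemma~\ref{Lemma:a} and the inductive Claim in Section~\ref{section:germ}.

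There is, however, a genuine gap at the pivotal step ``stable implies analytically linearizable.'' You justify it only by the Leau--Fatou flower theorem, which rules out parabolic germs and hence the resonant non-linearizable case; it says nothing about a germ with $f'(0)=e^{2\pi i\lambda}$, $\lambda\in\mathbb R\setminus\mathbb Q$, that is \emph{not} linearizable (a Cremer-type germ). This is exactly the case that decides the theorem, and it is where the paper invokes P\'erez-Marco's description of the dynamics of non-resonant non-linearizable germs (\cite{P6,P7}) to contradict $L$-stability; alternatively one can argue classically that two-sided stability yields a bounded simply connected fully invariant domain on which $f$ is an automorphism fixing $0$ with $|f'(0)|=1$, hence a rotation after a Riemann map --- but some such argument must be supplied, and your text contains none. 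Two smaller gaps: (i) in the all-rational case, finiteness of the holonomy of each component plus local first integrals at the reduced singularities does not by itself give a first integral; one needs the inductive Mattei--Moussu scheme with the finiteness of the global invariance group (Claim~\ref{Claim:globalinvariance} in the paper), and your appeal to ``Mattei--Moussu's criterion'' presupposes that the leaves are closed, which is part of what is being proved; (ii) the patching of the transverse forms across the singular points and corners of $E$ requires comparing holonomies of \emph{different} components at a common singularity, which is why the paper works with virtual holonomy groups and propagates the non-resonant linearizable element along the divisor --- your per-component groups $G_i$ are not quite enough as stated.
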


For $L$-stable germs we can assure the existence of holomorphic first integral, by analyzing the {\it open leaves}, i.e., those which are not contained in the set of separatrices.

\begin{Corollary}
\label{Corollary:closedoff}
A non-dicritical germ of a foliation at the origin $0 \in \mathbb C^2$ admits a holomorphic first integral if, and only if, it is $L$-stable and has some open leaf which is not recurrent, e.g., if some open leaf is closed off the set of separatrices.
\end{Corollary}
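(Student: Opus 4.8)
The statement is an equivalence, and its nontrivial half has essentially been settled already: Theorem~\ref{Theorem:main} reduces every $L$-stable germ to one of two models, so my plan is to verify the forward implication directly and to treat the backward implication by eliminating the logarithmic alternative with the dynamical hypothesis.

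\emph{Forward direction.} Suppose $\fa$ admits a holomorphic first integral $f$, normalized so that $f(0)=0$. Since $0$ is an isolated, non-dicritical singularity, the germ of $\mathrm{Sep}(\fa)$ coincides with the branches of $\{f=0\}$. I would first record $L$-stability: the function $|f|$ is continuous and constant along leaves, so each sublevel set $V_\varepsilon=\{|f|<\varepsilon\}$ is $\fa$-saturated and contains $\mathrm{Sep}(\fa)$; given any neighborhood $W\supseteq\mathrm{Sep}(\fa)$, continuity of $f$ on a small polydisc produces $\varepsilon>0$ with $V_\varepsilon\subset W$, and since $V_\varepsilon$ is its own saturate this is exactly the defining property of $L$-stability. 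Next, the open leaves are the connected components of the fibers $\{f=c\}$ with $c\neq0$; each such fiber is an analytic curve, closed in $U\setminus\{f=0\}$, so every open leaf is closed off the set of separatrices and, in particular, not recurrent.

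\emph{Backward direction.} Assume now that $\fa$ is $L$-stable and carries a non-recurrent open leaf $L_0$. By Theorem~\ref{Theorem:main} either $\fa$ already admits a holomorphic first integral, in which case there is nothing to prove, or $\fa$ is a real logarithmic germ $\Omega=\sum_{j=1}^r\lambda_j\,df_j/f_j$ with $\lambda_j\in\mathbb{R}$. In the second case the goal is to promote $\Omega$ to a holomorphic first integral, and the only remaining datum to exploit is $L_0$. The mechanism is a dynamical dichotomy for these germs, which I expect to be the main obstacle.

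\emph{Dynamical step.} The multivalued integral $F=\prod_j f_j^{\lambda_j}$ has single-valued modulus $|F|=\prod_j|f_j|^{\lambda_j}$, a real-analytic first integral, so each leaf lies inside a level hypersurface $\{|F|=c\}$. On a transversal disc $\Sigma$ through a generic point of a separatrix $S_{j_0}=\{f_{j_0}=0\}$, the open leaves meeting $\Sigma$ correspond to orbits of the holonomy pseudogroup, which for a logarithmic form is generated by the commuting linear rotations $z\mapsto e^{2\pi i\,\lambda_j/\lambda_{j_0}}z$ arising from loops around the remaining components $\{f_j=0\}$ (consistent with the fact, noted in the introduction, that $L$-stable leaves have abelian holonomy). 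If some ratio $\lambda_j/\lambda_{j_0}$ were irrational, the associated rotation would have orbits dense in the circles $\{|z|=\text{const}\}$, forcing $L_0$ to accumulate on itself and hence to be recurrent, contrary to hypothesis. Therefore all ratios $\lambda_i/\lambda_j$ are rational. Turning the orbit picture into a rigorous statement about recurrence of $L_0$ — identifying the generators as rotations and matching "dense orbit on $\Sigma$" with "recurrent leaf" — is the step that will need the most care.

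\emph{Conclusion.} Given $\lambda_i/\lambda_j\in\mathbb{Q}$ for all $i,j$, write $\lambda_j=\rho\,m_j$ with $\rho\in\mathbb{R}$ and coprime $m_j\in\mathbb{Z}$, so that $G=\prod_j f_j^{m_j}$ is a meromorphic first integral with $\Omega=\rho\,dG/G$. If the $m_j$ did not all share a sign, then setting $P=\prod_{m_j>0}f_j^{m_j}$ and $Q=\prod_{m_j<0}f_j^{-m_j}$ we would have $P(0)=Q(0)=0$, and the pencil $\{aP+bQ=0\}_{[a:b]\in\mathbb{P}^1}$ would yield infinitely many invariant curves through the origin, contradicting non-dicriticality. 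Hence all $m_j$ have a common sign, and after replacing $G$ by $G$ or $1/G$ we obtain the holomorphic first integral $\prod_j f_j^{|m_j|}$ for $\fa$, which completes the proof.
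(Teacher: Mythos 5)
Your proof is correct and takes essentially the same route as the paper's: invoke Theorem~\ref{Theorem:main}, note that in the logarithmic alternative the rotations $z\mapsto e^{2\pi i\lambda_j/\lambda_1}z$ lie in the virtual holonomy, and observe that an irrational rotation would make every open leaf through the transversal recurrent (equivalently, non-closed off the separatrices), forcing all ratios $\lambda_j/\lambda_1$ to be rational. The step you flag as needing the most care is exactly what the paper packages in Lemma~\ref{Lemma:Lstablegroup} (an $L$-stable group with a pseudo-orbit that is non-recurrent or closed off the origin is finite, hence resonant), applied to the virtual holonomy group; your write-up is in fact more complete than the paper's in two respects it leaves implicit, namely the forward implication (a holomorphic first integral yields $L$-stability via the saturated sublevel sets $\{|f|<\varepsilon\}$ and closed open leaves) and the pencil/non-dicriticality argument showing the exponents $m_j$ share a sign so that the meromorphic first integral $\prod f_j^{m_j}$ can be taken holomorphic.
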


    \section{Groups of complex diffeomorphisms}{}
Let  $\Diff({\mathbb C},0)$  denote the group of germs at the origin $0\in {\mathbb C}$ of holomorphic diffeomorphisms.  A germ $f \in G$ writes
$f(z)= e^{2 \pi \sqrt{-1} \lambda} z + a_{k+1} z^{k+1} + ...$.  The germ is {\it resonant} if $\lambda \in \mathbb Q$, its  dynamics is well-known (\cite{bracci,camacho}). We call $f$  {\it hyperbolic} if $\lambda \in \mathbb C \setminus \mathbb R$. A  hyperbolic germ of a diffeomorphism is {\it analytically linearizable} (\cite{Dulac}), and its dynamics is one of an attractor or of a repeller.  If  $\lambda \in \mathbb R \setminus \mathbb Q$  then we shall say that the diffeomorphism is {\it {\rm(}real{\rm)} non-resonant}. In the case the map is anaytically linearizable it looks like an irrational rotation. In the non linearizable case  P\'erez-Marco (\cite{P6,P7}) gives a quite accurate description of their rich dynamics. Such a map $f$ will also be referred to in this paper as a {\it P\'erez-Marco} map germ.

 The following definition will be useful.
\begin{Definition}[$L$-stable group]
\label{Definition:Lstablegroup}
{\rm
 A subgroup $G\subset \Diff(\mathbb C,0)$ is {\it $L$-stable}
if for any neighborhood $0 \in U\subset \mathbb C$ there is a sub-neighborhood $0 \in V \subset U \subset \mathbb C$ such that given  a point $p \in V$ its  pseudo-orbit (under the action of $G$) remains in $U$.}
\end{Definition}

A group $G\subset \Diff(\mathbb C,0)$ of germs of holomorphic diffeomorphisms will be called {\it resonant} if each map $g \in G$ is a resonant germ. The subgroup $G\subset \Diff(\mathbb C,0)$ is said to be of {\it circle type} group if it is  abelian and contains a real non-resonant analytically linearizable map. In particular, the group is analytically linearizable.

An important  tool is:

\begin{Lemma}
\label{Lemma:Lstablegroup}
Let $G\subset \Diff(\mathbb C,0)$ be a finitely generated  $L$-stable subgroup.
Then $G$ is finite cyclic or circle type. In particular,  any finitely generated resonant subgroup of a $L$-stable group is finite. The group $G$ is finite if  it admits a non-trivial pseudo-orbit which is closed off the origin (or closed). Also $G$ is finite provided that some non-trivial  pseudo-orbit is not recurrent.
\end{Lemma}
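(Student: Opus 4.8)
The plan is to study a finitely generated $L$-stable subgroup $G \subset \Diff(\mathbb C,0)$ by analyzing the possible local dynamics of each generator, then controlling how these combine. First I would record the basic dichotomy for a single germ $g \in G$: writing $g(z) = e^{2\pi\sqrt{-1}\lambda} z + \hot$, the linear part $\lambda$ is either hyperbolic ($\lambda \in \mathbb C \setminus \mathbb R$), real non-resonant ($\lambda \in \mathbb R \setminus \mathbb Q$), or resonant ($\lambda \in \mathbb Q$). The key first observation is that $L$-stability is incompatible with the presence of any attracting or repelling behavior: if some $g \in G$ were hyperbolic, its orbit would either spiral into $0$ or escape, and in either direction (using $g$ or $g^{-1}$) one produces a point arbitrarily close to $0$ whose orbit leaves a fixed $U$, contradicting the $L$-stable condition. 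Hence no generator can be hyperbolic, and more generally no element of $G$ can be hyperbolic.

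Next I would rule out non-linearizable dynamics. A resonant germ that is not periodic has nontrivial parabolic dynamics (petals/Leau--Fatou flowers), producing orbits that converge to $0$ along some directions and escape along others; this again violates $L$-stability. So every resonant element of $G$ must in fact be \emph{periodic}, i.e.\ of finite order, and therefore analytically linearizable. Similarly, a real non-resonant germ that is a P\'erez-Marco (non-linearizable) map has orbits whose closures are complicated and, crucially, not contained in arbitrarily small neighborhoods while staying away from $0$; I would invoke the description of their dynamics (the cited P\'erez-Marco results) to argue that $L$-stability forces any such real non-resonant element to be analytically linearizable, hence conjugate to an irrational rotation. The upshot of these two steps is that \emph{every} element of $G$ is analytically linearizable, being either a finite-order (rational rotation) germ or an irrational rotation.

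With every generator linearizable, I would then argue that $G$ is simultaneously linearizable. The cleanest route is to produce a single coordinate in which all elements are linear: since $L$-stability gives uniform control of pseudo-orbits, I would build a $G$-invariant structure (for instance, averaging or using the common linearizing coordinate forced by commutation) to linearize the whole group at once. Once $G$ is a group of linear maps $z \mapsto e^{2\pi\sqrt{-1}\theta} z$, it is abelian and is determined by the subgroup $\Theta \subset \mathbb R/\mathbb Z$ of its rotation numbers. If $\Theta$ is contained in $\mathbb Q/\mathbb Z$ then $G$ is a finitely generated subgroup of $\mathbb Q/\mathbb Z$, hence finite cyclic; if $\Theta$ contains an irrational element then $G$ is abelian and contains a real non-resonant linearizable map, i.e.\ $G$ is of circle type. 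This yields the main dichotomy. The finiteness addenda then follow: if $G$ admits a nontrivial pseudo-orbit that is closed off the origin (or not recurrent), then $G$ cannot be of circle type, because an irrational rotation has every nontrivial orbit dense in a circle and hence recurrent and not closed; so $G$ must be finite cyclic. Likewise a finitely generated resonant subgroup of an $L$-stable group has all rotation numbers rational and all elements periodic, so it is finite.

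I expect the main obstacle to be the non-linearizable cases, especially the real non-resonant P\'erez-Marco maps. For the hyperbolic and resonant-parabolic cases the escape/attraction of orbits is transparent, but for P\'erez-Marco maps the dynamics is subtle---the hedgehog and the delicate recurrence structure mean I must argue carefully that $L$-stability is genuinely violated, rather than merely plausibly so. The crux is thus to extract, from the $L$-stability hypothesis, a quantitative statement that contradicts the known rich recurrence of non-linearizable non-resonant germs; this is where I would need to lean most heavily on the cited dynamical description and where the proof is most delicate.
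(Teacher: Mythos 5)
Your proposal follows essentially the same route as the paper: rule out hyperbolic elements and non-trivial elements tangent to the identity (hence every resonant element is periodic and the group is abelian, since commutators in $\Diff(\mathbb C,0)$ are tangent to the identity), invoke P\'erez-Marco's description to force real non-resonant elements to be linearizable, conclude the finite-cyclic/circle-type dichotomy, and derive the finiteness addenda from the fact that irrational rotations have recurrent orbits that are not closed off the origin. The only stylistic difference is that the paper obtains abelianness directly from the absence of flat elements before discussing linearization, whereas you reach it via simultaneous linearization; both amount to the same commutator argument.
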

\begin{proof}
The group cannot contain flat elements $f(z)= z + a_{k+1} z^{k+1} + \ldots$ because such elements do not satisfy the $L$-stability condition (cf. \cite{camacho}). Thus $G$ is abelian and the subgroup $G_{\res}\subset G$ of resonant elements,  is finite cyclic. For the same reason $G$ cannot contain hyperbolic elements. Finally, a non-resonant map germ $f\in G$ must be analytically linearizable, thanks to the description of its dynamics given by P\'erez-Marco (\cite{P6,P7}).
This shows that either $G$ is finite cyclic or it is a product of a cyclic group and a linearizable group containing some irrational rotation.
As for the second part we observe that, since $G$ has a maximal cyclic finite subgroup, which is normal say generated by $g\in G$,
any two periodic maps in $G$ are powers of $g$ and therefore commute.
Finally, assume that $G$ has a pseudo-orbit which is closed off the origin. Then $G$ cannot contain irrational rotation, and therefore $G$ is finite. The same holds for the case $G$ exhibits some non-trivial pseudo-orbit which is not recurrent.
\end{proof}

\section{Reduction of singularities in dimension two (\cite{seidenberg})}
\label{section:reduction}
 Fix now a germ of holomorphic foliation with a singularity at the origin $0\in \mathbb C^2$.
 Choose a representative $\fa(U)$ for the germ $\fa$, defined in an open neighborhood $U$ of the origin, such that $0$ is the only singularity of $\fa(U)$ in $U$. In this framework, the
 {\it Theorem of reduction of singularities} (\cite{seidenberg})
  asserts the existence of a proper holomorphic map $\sigma\colon \tilde U \to U$  which is a finite composition of quadratic blowing-up's, starting with a blowing-up  at the origin,  such that the pull-back foliation
$\tilde \fa:= \sigma^* \fa$ of $\fa$ by $\sigma$ satisfies:

 \begin{enumerate}
 \item The {\it exceptional divisor\/} $E=\sigma^{-1}(0)\subset$
$\widetilde U$  can be written as
$E=\bigcup _{j=1}^m D_j$, where each irreducible component $D_j$ is diffeomorphic to an
embedded projective line ${\mathbb C} P(1)$ introduced as a divisor of
the successive blowing-up's (\cite{camacho-sad}).

\item  $\mbox{sing} \tilde \fa  \subset E$ is a finite set, and
 any singularity $\tilde p \in \sing \tilde \fa$ is {\it irreducible} i.e., belongs to one of
the following categories:
\begin{enumerate}
 \item  $xdy - \lambda ydx +\h.o.t.=0$ and $\lambda$ is not a positive rational number, i.e.
 $\lambda\notin\qq_+$
({\it non-degenerate singularity}),

 \item  $y^{p+1}dx -[x(1 + \lambda y^p)+ \h.o.t.]$ $dy=0$, $p\ge 1$.
This  case is called a {\it saddle-node\/} and it is extensively studied in  (cf. \cite{martinet-ramisselano}).
\end{enumerate}

\end{enumerate}

We call the lifted foliation $\tilde \fa$
the {\it desingularization} or {\it reduction of singularities} of
$\fa$.  We observe  that {\em $\fa$ is
non-dicritical iff $E$ is invariant by $\tilde \fa$}.
   Any two  components  $D_i$ and $D_j$, $i\ne j$, of the exceptional divisor,  intersect (transversely) at at
most one point, which is  called a {\it corner}. There are no
triple intersection points. An irreducible singularity $xdy - \lambda ydx + \hot=0$ is in the {\it Poincar\'e domain} if $\lambda \notin\mathbb R_-$ and it is in the {\it Siegel domain} otherwise.

We say that $\fa$ is a  {\it generalized curve\/} if its reduction of singularities produces only non-degenerate  (i.e., no saddle-node) singularities (\cite{C-LN-S1}).

\subsection{Holonomy and virtual holonomy groups}
\begin{center}

\end{center}
Let now $\fa$ be a holomorphic foliation with (isolated)  singularities on a complex
surface $M$ (we have in mind here, the result of a reduction of singularities process). Denote by $\sing(\fa)$ the singular set of $\fa$. Given a leaf $L_0$ of $\fa$ we choose any base point $p\in L_0\subset M \setminus \sing(\fa)$ and a transverse disc $\Sigma_p\Subset M$ to $\fa$ centered at $p$. The holonomy group of the leaf $L_0$ with respect to the disc $\Sigma_p$ and to the base point $p$ is denoted by  $\mbox{Hol}(\fa,L_0,\Sigma_p,p)\subset \mbox{Diff}(\Sigma_p,p)$. By considering any parametrization $z\colon (\Sigma_p,p) \to (\mathbb D,0)$ we may identify
(in a non-canonical way) the holonomy group with a subgroup of $\Diff(\mathbb C,0)$. It is clear from the construction  that the maps in the holonomy group preserve the leaves of the foliation. Nevertheless, this property can be shared by a larger group that may therefore contain more information about the foliation in a neighborhood of the leaf.
The {\it virtual holonomy group} of the leaf with respect to the transverse section $\Sigma_p$ and base point $p$ is defined as (\cite{C-LN-S}, \cite{camacho-scarduaasterisque})
$$\mbox{Hol}^{\mbox{virt}}(\fa,\Sigma_p,p)=\{ f \in\mbox{Diff}
(\Sigma_p, p) \big| L_z =  L_{f(z)},
\forall z\in (\Sigma_p, p) \}$$

 Fix now a germ of holomorphic foliation with a singularity at the origin $0\in \mathbb C^2$, with a  representative $\fa(U)$ as above. Let $\Gamma$ be a separatrix of $\fa$. By Newton-Puiseaux parametrization theorem,  $\Gamma\setminus \{0\}$  is
biholomorphic to a punctured disc $\mathbb D^*= \mathbb D\setminus \{0\}$. In particular,  we may choose a
loop $\gamma\in \Gamma\setminus\{0\}$ generating the (local) fundamental group $\pi_{1}(\Gamma\setminus\{0\})$. The corresponding holonomy map $h_{\gamma}$ is defined in terms of a germ of complex diffeomorphism at the origin of a local disc $\Sigma$ transverse to $\mathcal{F}$ and centered at a non-singular point $q\in \Gamma\setminus\{0\}$. This map is well-defined up to conjugacy by germs of holomorphic diffeomorphisms, and is generically referred to as \textit{local holonomy} of the separatrix $\Gamma$.

\subsection{$L$-stable singularities}
\begin{center}

\end{center}

Let $\fa,U$ be as in the beginning of this section. 
\begin{Definition}
\label{Definition:Lstablelocal}
{\rm Assume that  $\fa$ is a non-dicritical germ.  Denote by $\mbox{Sep}(\fa,U)$ the (analytic) set of separatrices of $\fa(U)$ in $U$.  We say that $\fa(U)$ is {\it $L$-stable} if given any neighborhood
$W\subset U$ of $\mbox{Sep}(\fa,U)$ there is a sub-neighborhood $\mbox{Sep}(\fa,U) \subset V \subset W$ such that its saturation $\mbox{Sat}^\fa _W(V)$ by $\fa(U)\big|_W$ is still contained in $W$, i.e., $\mbox{Sat}^\fa _W(V)\subset W$. The germ $\fa$ is called {\it $L$-stable} if $\fa(U)$ is $L$-stable for every arbitrarily small neighborhood  $U$ of the origin.
}
\end{Definition}

Then we have:

\begin{Lemma}
\label{Lemma:separatrix}
Let $\fa$ be a germ of a non-dicritical holomorphic foliation with a singularity at the origin $0\in \mathbb C^2$ and let $\Gamma$ be a separatrix of $\fa$.
If $\fa$ is $L$-stable then the holonomy map $h_\gamma\in \Diff(\mathbb C,0)$ is $L$-stable in the sense of Definition~\ref{Definition:Lstablegroup}.
\end{Lemma}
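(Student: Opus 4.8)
The plan is to reinterpret the holonomy pseudo-orbit as the trace on the transverse disc of a single leaf, and then to feed the Lyapunov-type control on leaves provided by the $L$-stability of $\fa$ directly into the $L$-stability condition for the cyclic group $\langle h_\gamma\rangle$. First I would fix the transverse disc $\Sigma$ centered at a nonsingular point $q\in\Gamma\setminus\{0\}$ together with an identification $(\Sigma,q)\cong(\mathbb C,0)$, and recall the basic dictionary: for $z\in\Sigma$ near $q$, the iterates $h_\gamma^n(z)$ are exactly the successive returns to $\Sigma$ of the leaf $L_z$ obtained by lifting the loops $\gamma^n$, so that the pseudo-orbit of $z$ under $\langle h_\gamma\rangle$ is contained in $L_z\cap\Sigma$. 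Since $q\neq 0$ and the separatrices distinct from $\Gamma$ meet $\Gamma$ only at the origin, for $\Sigma$ small enough one has $\Sigma\cap\Sep(\fa)=\{q\}$.

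Next, given a target neighborhood $0\in U\subset\mathbb C$, i.e.\ a disc $U\subset\Sigma$ about $q$, I would manufacture an \emph{adapted} neighborhood $W$ of $\Sep(\fa)$: over the loop $\gamma$ take a foliated product tube $N$ (a flow box along $\gamma$) whose transverse disc is contained in $U$, and complete it to a full neighborhood $W\supset N$ of the separatrix set which is thin near $\Sigma$, so that $W\cap\Sigma\subset U$. This is possible precisely because $\Sigma$ meets $\Sep(\fa)$ only at $q$. Applying the $L$-stability of $\fa$ to $W$ then yields a neighborhood $V_0$ of $\Sep(\fa)$ whose saturation $\Sat^\fa_W(V_0)$ stays inside $W$; in particular every leaf through a point of $V_0$ remains in $W$. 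I then set $V:=V_0\cap\Sigma$, a disc about $q$ inside $U$.

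It remains to verify Definition~\ref{Definition:Lstablegroup} for $z\in V$. Because $L_z\subset W$, all intersections $L_z\cap\Sigma$ lie in $W\cap\Sigma\subset U$; and because $L_z$ stays inside the product tube $N$ over $\gamma$, the lift of $\gamma$ along $L_z$ can be iterated indefinitely, so every iterate $h_\gamma^n(z)$ is defined and is one of those intersection points. Hence the whole pseudo-orbit of $z$ lies in $U$, which is exactly the $L$-stability condition for the cyclic group $\langle h_\gamma\rangle$, and therefore for $h_\gamma$.

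The hard part will be the passage in the previous paragraph: making rigorous that the ambient control ``$L_z\subset W$'' forces both the \emph{well-definedness of all holonomy iterates} (existence of the lifts of $\gamma^n$ for every $n$, which could a priori fail if the leaf accumulated on the singular set or on another separatrix) and the containment of the returns in $U$. The delicate balance is that $W$ must simultaneously be a genuine neighborhood of the \emph{entire} separatrix set---so that the hypothesis of $L$-stability of $\fa$ applies---while being thin enough near $\Sigma$ to guarantee $W\cap\Sigma\subset U$; I also expect one must argue carefully that the pseudo-orbit, even when it is only a partial orbit, is faithfully recorded as a subset of $L_z\cap\Sigma$.
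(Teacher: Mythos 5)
Your proof is correct and captures what the paper leaves implicit: the paper gives no separate argument for this lemma, deferring to ``a simple adaptation'' of the proof of Lemma~\ref{Lemma:holonomystable}, and your argument is a legitimate instantiation of that adaptation. The core mechanism is identical in both: the pseudo-orbit of $z$ is contained in $L_z\cap\Sigma$, the $L$-stability of $\fa$ traps $L_z$ in $W$, and the containment $W\cap\Sigma\subset U$ finishes the job. Where you differ structurally is in how $W$ is produced. In Lemma~\ref{Lemma:holonomystable} the paper takes $W:=\Sat_\fa(U)$ and normalizes so that $W\cap\Sigma=U$; in the local setting that trick needs patching, since the saturation of a transverse disc need not be a neighborhood of the \emph{whole} separatrix set $\Sep(\fa,U)$ required by Definition~\ref{Definition:Lstablelocal}. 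Your direct construction of a neighborhood of $\Sep(\fa)$ that is thin near $\Sigma$ (using $\Sigma\cap\Sep(\fa)=\{q\}$) sidesteps this cleanly and is arguably the more natural local adaptation. One small overreach: from $L_z\subset W$ you cannot conclude that $L_z$ stays inside the flow-box tube $N$ (the leaf may wander in $W\setminus N$), so your claim that \emph{all} iterates $h_\gamma^n(z)$ are defined is not justified as written. But this is harmless: the pseudo-orbit in Definition~\ref{Definition:Lstablegroup} consists only of those compositions that \emph{are} defined, and every such point lies in $L_z\cap\Sigma\subset W\cap\Sigma\subset U$, which is all the $L$-stability of $\langle h_\gamma\rangle$ requires.
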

The proof is   standardly  obtained from a simple adaptation in the proof of Lemma~\ref{Lemma:holonomystable}.
\subsection{The non-degenerate (irreducible) case}
\begin{center}

\end{center}
Let us consider  a germ of a holomorphic foliation $\fa$ at the origin $0\in \mathbb C^2$,  a germ of an irreducible non-degenerate   singularity. In suitable local coordinates we can write $\fa$ as given by
$x(1+ A(x,y)) dy - \lambda y(1 + B(x,y))dx=0$, for some holomorphic $A(x,y), \, B(x,y)$ with $0 \ne \lambda \in \mathbb C \setminus \mathbb Q_+, \, A(0,0)=B(0,0)=0$. In the normal form above, the separatrices are the coordinate axes. Let us denote by $f$ the holonomy map (its class up to holomorphic conjugacy) of the separatrix $(y=0)$. From the correspondence between the leaves of $\fa$ and the orbits of $f$ (\cite{mattei-moussu,martinet-ramisresonante,perezmarcoyoccoz}) and according to the well-known properties of $f$ (see \cite{bracci,camacho} for the case of germs with periodic  linear part and \cite{P6,P7} for the case of non-resonant germs) we have:
\begin{Lemma}
\label{Lemma:irreduciblenondegenerate}
Let $\fa$ be a germ of an irreducible $L$-stable  singularity at the origin $0 \in \mathbb C^2$. Then $\fa$ is  analytically linearizable of the form $xdy - \lambda ydx=0$ where $\lambda \in \mathbb Q_- \cup (\mathbb R\setminus \mathbb Q)$.
\end{Lemma}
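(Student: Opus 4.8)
The plan is to transfer all the information to the holonomy $f$ of the separatrix $(y=0)$ and then run the dynamical dichotomy of Lemma~\ref{Lemma:Lstablegroup}. Since $\fa$ is $L$-stable, Lemma~\ref{Lemma:separatrix} tells me that $f=h_\gamma\in\Diff(\mathbb{C},0)$ is $L$-stable in the sense of Definition~\ref{Definition:Lstablegroup}. The local fundamental group $\pi_1(\Gamma\setminus\{0\})\cong\mathbb{Z}$ is generated by a single loop, so the holonomy group of the separatrix is the cyclic group $G=\langle f\rangle$, which is finitely generated and $L$-stable. By Lemma~\ref{Lemma:Lstablegroup}, $G$ is therefore either finite cyclic or of circle type.

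Next I would read off the multiplier of $f$ from the normal form. A direct computation of the holonomy of $(y=0)$ for $x(1+A)\,dy-\lambda y(1+B)\,dx=0$ shows that the linear part of $f$ is $e^{2\pi\sqrt{-1}\lambda}$, so the arithmetic type of $f$ in $\Diff(\mathbb{C},0)$ is dictated by $\lambda$, and I split into cases. If $\lambda\in\mathbb{C}\setminus\mathbb{R}$ then $|e^{2\pi\sqrt{-1}\lambda}|\ne1$, so $f$ is hyperbolic; but an $L$-stable group contains no hyperbolic element (Lemma~\ref{Lemma:Lstablegroup}), a contradiction, so this case is impossible. If $\lambda\in\mathbb{Q}$, then since $\lambda\notin\mathbb{Q}_+$ and $\lambda\ne0$ we have $\lambda\in\mathbb{Q}_-$ and $f$ is resonant; a finitely generated resonant subgroup of an $L$-stable group is finite, so $f$ is periodic and hence analytically linearizable (average a coordinate over the finite orbit). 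If $\lambda\in\mathbb{R}\setminus\mathbb{Q}$ then $f$ is real non-resonant, so $G$ must be of circle type and in particular $f$ is analytically linearizable; this is exactly where the description of P\'erez-Marco \cite{P6,P7} enters, ruling out the non-linearizable dynamics incompatible with $L$-stability. In all surviving cases $\lambda\in\mathbb{Q}_-\cup(\mathbb{R}\setminus\mathbb{Q})$ and $f$ is analytically conjugate to the rotation $z\mapsto e^{2\pi\sqrt{-1}\lambda}z$.

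It remains to upgrade the linearization of the holonomy to a linearization of the foliation. Here I would invoke the correspondence between the leaves of $\fa$ and the orbits of $f$ together with the analytic classification of non-degenerate singularities by the holonomy of a separatrix (\cite{mattei-moussu,martinet-ramisresonante,perezmarcoyoccoz}): an analytic conjugacy between $f$ and the linear model $z\mapsto e^{2\pi\sqrt{-1}\lambda}z$ extends, along the leaves, to an analytic conjugacy between $\fa$ and the linear foliation $x\,dy-\lambda y\,dx=0$. This yields precisely the claimed normal form with $\lambda\in\mathbb{Q}_-\cup(\mathbb{R}\setminus\mathbb{Q})$.

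I expect the last step to be the main obstacle. Reducing $\fa$ to a one-dimensional problem is painless once Lemmas~\ref{Lemma:separatrix} and~\ref{Lemma:Lstablegroup} are in hand, and the case analysis on $\lambda$ is immediate. The delicate point is the holonomy-determines-foliation statement in the resonant case $\lambda\in\mathbb{Q}_-$ and in the irrational Siegel case $\lambda\in(\mathbb{R}\setminus\mathbb{Q})\cap\mathbb{R}_-$: one must control the extension of the conjugacy across the separatrices and their corner, where small-divisor phenomena and the global structure of the leaves make the convergence of the extended conjugacy nontrivial. I would lean on the cited analytic classification results to handle precisely this extension rather than attempt it by hand.
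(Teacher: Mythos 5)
Your argument follows the paper's proof essentially line for line: reduce to the holonomy $f$ of a separatrix via Lemma~\ref{Lemma:separatrix}, apply the dichotomy of Lemma~\ref{Lemma:Lstablegroup} to the cyclic $L$-stable group $\langle f\rangle$, split on the arithmetic of $\lambda$ (hyperbolic excluded, resonant forces finite hence periodic holonomy, real non-resonant forces a linearizable P\'erez-Marco germ), and then transfer linearizability of the holonomy back to the foliation through the Mattei--Moussu/Martinet--Ramis/P\'erez-Marco--Yoccoz correspondence -- which is exactly the route the paper takes, including the reliance on those references for the "holonomy determines the singularity" step you correctly identify as the delicate point.

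The one omission: "irreducible" in the sense of Section~\ref{section:reduction} also allows saddle-nodes, and you begin directly from the non-degenerate normal form $x(1+A)\,dy-\lambda y(1+B)\,dx=0$. Since the conclusion of the lemma is false for a saddle-node, that case must be excluded explicitly, and the paper opens its proof by doing so: the holonomy of the strong manifold of a saddle-node is a non-trivial germ tangent to the identity, $z\mapsto z+a_{k+1}z^{k+1}+\cdots$, and such flat elements are never $L$-stable (first paragraph of the proof of Lemma~\ref{Lemma:Lstablegroup}); hence the singularity is non-degenerate. Adding that sentence makes your proof complete and coincident with the paper's.
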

\begin{proof}
We have in mind Lemma~\ref{Lemma:separatrix}. 
First we observe that the singularity is not a saddle-node. Indeed,  the strong  manifold of a saddle-node exhibits a non-trivial holonomy tangent to the identity, say of the form $z \mapsto z + a_{k+1} z^{k+1} + \ldots$ for some $a_{k+1} \ne 0, k \in \mathbb N$. Thus, a strong manifold  separatrix is not $L$-stable.  Therefore  the singularity is non-degenerate, of the form $xdy - \lambda ydx + ...=0$ for some $\lambda \in \mathbb C \setminus \mathbb Q_+$.
Analyzing case by case we have:
\begin{enumerate}
\item $\lambda \in \mathbb C \setminus \mathbb R$ hyperbolic case: In this case the singularity is analytically linearizable.
     We may therefore choose local coordinates $(x,y)\in (\mathbb C^2,0)$ such that the germ writes as $xdy - \lambda ydx=0$.
The holonomy of one of the coordinate axes with respect to a small disc $\Sigma: \{x=a\}$ is given by $h(y)=\exp(2 \pi \sqrt{-1}\lambda) y$.  Thus  $|h^\prime(0)|\ne 1$ and the map $h$ is not $L$-stable.
  \item  resonant  non-linearizable case, $\lambda \in \mathbb Q_-$:
      this case also cannot occur, because a holonomy map of a separatrix is then of the form $h(z)= e^{2 \pi i n /m}z + h_2 z^2 + h_3 z^3 +...$, for some $n ,m \in \mathbb N, \, <n,m>=1$.
      This map is not periodic, because the singularity is not linearizable (any periodic germ of diffeomorphism is linearizable). On the other hand $h^m = h\circ ... \circ h$ is (non-trivial and) tangent to the identity, and therefore $h$ is not $L$-stable.

      \item non-resonant  case, $\lambda \in \mathbb R \setminus \mathbb Q$: If the singularity is in the Poincar\'e domain, i.e., $\lambda \in \mathbb R_+$ then, since it is not a resonance (because $\lambda, 1/\lambda \notin\mathbb N$) it is analytically linearizable. This case can occur. Assume now that the singularity is in the Siegel domain, $\lambda \in \mathbb R_-\setminus \mathbb Q$. Suppose by contradiction that the singularity is not analytically linearizable. Then the local holonomy map of any separatrix is not analytically linearizable as well, that is, this holonomy map is a non-resonant nonlinearizable (P\'erez-Marco type map). In particular, we may conclude that this map is not $L$-stable (see Lemma~\ref{Lemma:Lstablegroup}), yielding a contradiction.

          \item resonant linearizable case: in this case the foliation is $L$-stable, it admits a holomorphic first integral.
\end{enumerate}

\end{proof}

\section{Reduction of stable  singularities}
The following notion is useful in our framework.
\begin{Definition}[$L$-stable divisor]
\label{Definition:L-stabledivisor}
{\rm Let $\fa$ be a holomorphic foliation with isolated singularities on a complex surface $M$ and $D\subset M$ an invariant compact connected analytic subset of pure dimension one. Assume that the singularities of $\fa$ in $D$ are non-dicritical. Under these conditions given a small enough neighborhood $D \subset U\subset M$, the set of local separatrices of $\fa(U):=\fa\big|_U$ though singular points in $D$ together with $D$ gives a pure dimension one analytic subset of $U$ denoted by $\mbox{Sep}(\fa(U),D) \cup D\subset U$. We shall then say that $D$ is {\it $L$-stable} (with respect to $\fa$)  if for any arbitrarily small neighborhood $U$ as above,  given any neighborhood $W\subset U$ of $\mbox{Sep}(\fa(U),D) \cup D\subset U$ there is a sub-neighborhood $\mbox{Sep}(\fa,U) \subset V \subset W$ such that its saturation $\mbox{Sat}^\fa _W(V)$ by $\fa(U)\big|_W$ is still contained in $W$, i.e., $\mbox{Sat}^\fa _W(V)\subset W$.}
\end{Definition}

\begin{Lemma}
\label{Lemma:zero}
Let $\fa$ be a  $L$-stable germ of a foliation. Then $\fa$ is a generalized curve.
\end{Lemma}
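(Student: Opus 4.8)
The plan is to argue by contradiction. Assume $\fa$ is $L$-stable but \emph{not} a generalized curve, so that its reduction of singularities $\sigma\colon \tilde U \to U$ (Section~\ref{section:reduction}) produces at least one saddle-node singularity $\tilde p$ lying on the exceptional divisor $E=\sigma^{-1}(0)$. Since $\fa$ is non-dicritical, $E$ is invariant by $\tilde\fa:=\sigma^*\fa$. The strategy is to transport the $L$-stability hypothesis from $\fa$ downstairs to $\tilde\fa$ along $E$ upstairs, and then to exhibit at $\tilde p$ a holonomy germ tangent to the identity, which is not $L$-stable, contradicting what has just been transported. This mirrors, at the level of the reduction, the obstruction already exploited in the proof of Lemma~\ref{Lemma:irreduciblenondegenerate}.

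First I would show that $E$ is an $L$-stable divisor for $\tilde\fa$ in the sense of Definition~\ref{Definition:L-stabledivisor}. Here I use that $\sigma$ is proper and restricts to a biholomorphism $\tilde U\setminus E \to U\setminus\{0\}$ conjugating $\tilde\fa$ to $\fa$. Consequently $\sigma$ carries a fundamental system of neighborhoods of $\mbox{Sep}(\tilde\fa)\cup E$ onto a fundamental system of neighborhoods of $\mbox{Sep}(\fa,U)\cup\{0\}$, and for a subset disjoint from $E$ its $\tilde\fa$-saturation maps exactly onto the $\fa$-saturation of its image; the compact invariant set $E$ itself, being a union of leaves and singularities, does not let orbits escape. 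Thus the defining saturation-containment property of $L$-stability for $\fa$ near $\mbox{Sep}(\fa,U)\cup\{0\}$ translates into the corresponding property for $\tilde\fa$ near $\mbox{Sep}(\tilde\fa)\cup E$, i.e. $E$ is $L$-stable.

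Next, applying to the singularity $\tilde p\in E$ the adaptation of Lemma~\ref{Lemma:separatrix} to an $L$-stable divisor, every local separatrix of $\tilde\fa$ through $\tilde p$ has $L$-stable holonomy in the sense of Definition~\ref{Definition:Lstablegroup}. In particular this applies to the strong manifold of the saddle-node $\tilde p$; when this separatrix happens to be transverse to $E$ it is the strict transform of a genuine separatrix of $\fa$, and one may instead invoke Lemma~\ref{Lemma:separatrix} directly after blowing down, so the conclusion holds in either case. I would then derive the contradiction exactly as in Lemma~\ref{Lemma:irreduciblenondegenerate}: the strong manifold of a saddle-node carries a non-trivial holonomy tangent to the identity, $h(z)=z+a_{k+1}z^{k+1}+\cdots$ with $a_{k+1}\ne 0$. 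Such flat germs fail the $L$-stability condition (the same reason that rules them out in Lemma~\ref{Lemma:Lstablegroup}, cf. \cite{camacho}), since their parabolic dynamics pushes points out of every small disc. This contradicts the $L$-stability established above, so no saddle-node can occur in the reduction of $\fa$, and $\fa$ is a generalized curve.

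The main obstacle is the first step: the rigorous transfer of $L$-stability through the proper modification $\sigma$ that collapses $E$ to the origin. The delicate points are matching the neighborhood systems of $\mbox{Sep}(\tilde\fa)\cup E$ and of $\mbox{Sep}(\fa,U)\cup\{0\}$ across $\sigma$, verifying that saturations correspond off the divisor, and ruling out that the compact invariant divisor $E$ itself creates orbits escaping a prescribed neighborhood. Once this correspondence is in place, the holonomy argument is essentially the one already used for irreducible singularities.
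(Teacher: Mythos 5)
Your argument is correct and rests on the same two pillars as the paper's proof: the transfer of $L$-stability from the germ at the origin to the exceptional divisor of the blown-up foliation, and the fact that the strong manifold of a saddle-node carries a non-trivial holonomy tangent to the identity, which cannot belong to an $L$-stable group (Lemmas~\ref{Lemma:Lstablegroup} and~\ref{Lemma:irreduciblenondegenerate}). Where you differ is in the organization: you run a single contradiction argument at the level of the \emph{final} reduction $\sigma\colon\tilde U\to U$, transferring $L$-stability through the whole composition of blow-ups at once, whereas the paper proceeds by induction on the number $r$ of blow-ups, with Lemma~\ref{Lemma:irreduciblenondegenerate} as the base case and, in the inductive step, only one blow-up: it establishes that $E(1)$ is $L$-stable and that the germ of $\tilde\fa(1)$ at each singular point of $E(1)$ is again $L$-stable, then invokes the induction hypothesis. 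Your one-shot version is more direct and avoids repeating the transfer at every stage, but it must handle the two positions of the strong manifold relative to $E$ (inside a component of $E$, where you need the divisor version of Lemma~\ref{Lemma:separatrix} for the holonomy group of the projective line, versus transverse to $E$, where you blow down to a genuine separatrix of $\fa$) --- you do address both. The paper's inductive formulation has the side benefit that its intermediate claims ($E(1)$ is $L$-stable; the germs at the singularities of $\tilde\fa(1)$ are $L$-stable) are exactly what is reused in the proofs of Lemma~\ref{Lemma:a} and Theorem~\ref{Theorem:main}. In both treatments the transfer of $L$-stability through the proper modification is the least detailed step; your sketch of it (properness of $\sigma$, biholomorphism off the invariant divisor, correspondence of saturations) is at the same level of rigor as the paper's unproved Claim, so there is no gap relative to the paper's own standard.
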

\begin{proof}

We proceed by induction on the number $r\in \{0,1,2,...\}$ of blowing-ups
in the reduction of singularities for the germ $\fa$.

\noindent{\bf Case 1}. ($r=0$). In this case the singularity is already irreducible. The result follows from Lemma~\ref{Lemma:irreduciblenondegenerate}.

\noindent{\bf Case 2} (Induction step). Assume that the result is proved for foliation germs that admit a reduction
of singularities with a number of blowing-ups less greater than or equal to $r$. Suppose that the fixed
germ $\fa$ admits a reduction of singularities consisting of $r+1$ blowing-ups.
We fix a small  neighborhood $U\subset \mathbb C^2$ of the origin, such that $\mbox{Sep}(\fa,U)$ is analytic of pure dimension one.
For simplicity of the notation we shall denote $\fa(U)$ by $\fa$.

Then we perform a first blow-up $\sigma_1\colon \tilde U (1) \to U$ at the origin and obtain a lifted
foliation $\tilde \fa(1) = \sigma_1^*(\fa)$ with  (first) exceptional divisor $E(1)=\sigma_1^{-1}(0)$ consisting of
a single embedded invariant projective line in $\tilde U(1)$ (by hypothesis  the exceptional divisor is invariant by $\tilde \fa (1)$).
For any neighborhood $0 \in V\subset U\subset \mathbb C^2$ of the set of separatrices $\mbox{sep}(\fa,U)$, the inverse image $\tilde V(1):= \sigma_1^{-1}(V)\subset \tilde U(1)$  is a neighborhood of $E(1)\cup \mbox{Sep}(\fa(\tilde U(1)),E(1))$. Moreover, $\tilde V(1)$ is $\tilde \fa (1)\big|_{\tilde U(1)}$-invariant if and only if $V$ is $\fa\big|_U$-invariant.  Given a leaf $L$ of $\fa$ in $U$ we denote by $\tilde L (1) $ the lifting $\tilde L(1)=\sigma_1^{-1}(L)$ of $L$ to $\tilde U(1)$ by the map $\sigma_1\colon \tilde U (1) \to U$.
Notice that for each singularity $\tilde p \in \mbox{sing}(\tilde \fa(1))\subset E(1)$ the set of local separatrices of $\tilde \fa(1)$ through $\tilde p$ is formed by $E(1)$ union the local branches through $\tilde p$, of the strict transform by $\sigma(1)$ of  $\mbox{Sep}(\fa,U)$.
From the fact that the germ $\fa$ is $L$-stable, we conclude that:
\begin{Claim}
The divisor $E(1)$ is $L$-stable with respect to $\tilde \fa(1)$.
\end{Claim}
Given now a singularity $\tilde p \in \mbox{sing}(\tilde \fa(1))\cap E(1)$ we observe that the germ of $\tilde \fa(1)$ at $\tilde p$ is $L$-stable.
Therefore, by the induction hypothesis, $\tilde p$ is a generalized curve. This shows that all singularities in $E(1)$ are generalized curves and therefore the germ $\fa$ is a generalized curve. The lemma follows from the Induction Principle.
\end{proof}

\section{$L$-stable germs of foliations}
\label{section:germ}

In this section we prove  Theorem~\ref{Theorem:main}. We rely on Lemma~\ref{Lemma:zero} and on Lemma~\ref{Lemma:a} below.

\begin{Lemma}
\label{Lemma:a}
Let $\fa$ be a non-dicritical $L$-stable foliation germ as in Theorem~\ref{Theorem:main}. Then the following are equivalent:
\begin{enumerate}
\item $\fa$ admits a  holomorphic first integral in some neighborhood of the origin.
\item $\fa$  is  fully-resonant $\mbox{(see \cite{scarduaDynI})}$.
    \end{enumerate}

\end{Lemma}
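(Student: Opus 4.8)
The plan is to pass to a reduction of singularities and reduce the equivalence to a pointwise condition on the reduced singularities, where it becomes transparent, and then to globalize the ``if'' direction through finiteness of holonomy. First I would invoke Lemma~\ref{Lemma:zero}: since $\fa$ is $L$-stable it is a generalized curve, so a reduction $\sigma\colon \tilde U\to U$ produces $\tilde\fa=\sigma^*\fa$ with exceptional divisor $E=\sigma^{-1}(0)=\bigcup_j D_j$ and only non-degenerate singularities. Each $\tilde p\in\mathrm{Sing}(\tilde\fa)$ is an $L$-stable irreducible singularity, so by Lemma~\ref{Lemma:irreduciblenondegenerate} it is analytically linearizable, $x\,dy-\lambda_{\tilde p}\,y\,dx=0$ with $\lambda_{\tilde p}\in\mathbb{Q}_-\cup(\mathbb{R}\setminus\mathbb{Q})$. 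Recall (\cite{scarduaDynI}) that full resonance of $\fa$ means every reduced singularity is resonant, which here (by Lemma~\ref{Lemma:irreduciblenondegenerate}) is equivalent to $\lambda_{\tilde p}\in\mathbb{Q}_-$ for all $\tilde p$, equivalently that every local separatrix holonomy is periodic. Since $\sigma$ is a biholomorphism off $E$, a holomorphic first integral of $\fa$ near $0$ corresponds to a holomorphic first integral of $\tilde\fa$ on a neighborhood of $E$ (a bounded holomorphic function on $\tilde U\setminus E$ descends and extends across $0$ by Riemann's theorem). Thus it suffices to argue for $\tilde\fa$.

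For the implication (1)$\Rightarrow$(2), I would suppose $\tilde\fa$ admits a holomorphic first integral and fix a reduced singularity $\tilde p$ with eigenvalue $\lambda=\lambda_{\tilde p}$. In linearizing coordinates the local holonomy of a separatrix through $\tilde p$, read on a transverse disc $\Sigma$, is $h(z)=e^{2\pi\sqrt{-1}\lambda}z$, and the restriction of the first integral to $\Sigma$ is a nonconstant holomorphic germ invariant under $h$. If $\lambda\in\mathbb{R}\setminus\mathbb{Q}$ then $h$ is an irrational rotation (Siegel) or satisfies $|h'(0)|\ne 1$ (Poincar\'e), and in either case no nonconstant holomorphic germ is $h$-invariant, a contradiction. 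Hence $\lambda_{\tilde p}\in\mathbb{Q}_-$ for every $\tilde p$, i.e. $\fa$ is fully resonant.

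For (2)$\Rightarrow$(1), I assume full resonance, so every $\lambda_{\tilde p}\in\mathbb{Q}_-$ and every local separatrix holonomy is periodic. Fixing a component $D_j$, its holonomy group is finitely generated by the local holonomies around the finitely many singular points lying on $D_j$, which generate the fundamental group of $D_j$ minus those points. As $\fa$ is $L$-stable the divisor $E$ and each $D_j$ are $L$-stable (cf. the Claim in the proof of Lemma~\ref{Lemma:zero}), and, arguing as in Lemma~\ref{Lemma:separatrix}, the holonomy group $G_j$ of $D_j$ is a finitely generated $L$-stable subgroup of $\Diff(\mathbb{C},0)$. By Lemma~\ref{Lemma:Lstablegroup} it is abelian and analytically linearizable, so the multiplier map $g\mapsto g'(0)$ is a homomorphism whose image is generated by the (root-of-unity) multipliers of the resonant generators, hence finite; therefore every element of $G_j$ is resonant, $G_j$ is a resonant $L$-stable group, and Lemma~\ref{Lemma:Lstablegroup} forces $G_j$ to be finite.

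Finiteness of all the groups $G_j$ is exactly the input of the Mattei--Moussu criterion (\cite{mattei-moussu}): all nearby leaves of $\tilde\fa$ are closed, at each resonant singularity $x\,dy-(p/q)\,y\,dx=0$ one has the local first integral $x^{p}y^{q}$, and the finite holonomies make these local integrals compatible and patch to a holomorphic first integral $\tilde F$ on a neighborhood of $E$; pushing $\tilde F$ down by $\sigma$ and extending across $0$ gives a holomorphic first integral of $\fa$. I expect the globalization in this last step to be the main obstacle: one must control the monodromy along the whole connected chain $E$ --- in particular across the corners, where two finite holonomies interact --- in order to glue the local integrals into a single-valued holomorphic function, and it is precisely the finiteness of every $G_j$ obtained above, together with the generalized-curve structure, that makes this patching possible.
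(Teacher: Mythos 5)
Your proposal is correct in substance but organized differently from the paper. The paper proves only (2)$\Rightarrow$(1) (declaring the converse well-known; your explicit argument via the non-existence of nonconstant germs invariant under an irrational rotation or a hyperbolic map is a fine way to supply it), and it proceeds by induction on the number of blow-ups: after a single blow-up, the induction hypothesis furnishes a local holomorphic first integral at each singularity of $\tilde\fa(1)$ on the single projective line $E(1)$, and the gluing is controlled by the \emph{global invariance group} $\Inv(\tilde\fa,\Sigma_0)$ --- the group generated at a common transversal by the invariance groups of these local first integrals --- which is shown to be finite because it is resonant, finitely generated, and sits inside the $L$-stable \emph{virtual} holonomy group (Claim~\ref{Claim:subgroupvirtual}). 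You instead pass directly to the full reduction, where every reduced singularity is an irreducible linearizable resonant point, show each component's holonomy group $G_j$ is finite (your multiplier-homomorphism argument that an abelian $L$-stable group generated by periodic maps is entirely resonant, hence finite by Lemma~\ref{Lemma:Lstablegroup}, is a useful explicit version of a step the paper states more tersely), and then invoke the Mattei--Moussu patching. The trade-off: your route makes the local data completely explicit ($x^py^q$ at each reduced point, invariance group equal to the cyclic group generated by the local holonomy), but it shifts all the difficulty onto the gluing across corners of the full divisor $E=\bigcup_j D_j$, which you correctly flag but do not carry out; the paper's induction is precisely the device that avoids ever having to glue across corners directly, at the cost of needing the virtual holonomy group because the local first integrals supplied by the induction hypothesis have invariance groups that are not a priori cyclic or generated by the local holonomy. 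Both arguments ultimately rest on the same two pillars, Lemma~\ref{Lemma:zero} and the finiteness clause of Lemma~\ref{Lemma:Lstablegroup}, so I regard your proof as a legitimate alternative provided the corner compatibility is either worked out or cleanly attributed to the inductive scheme of \cite{mattei-moussu}.
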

\begin{proof}
Since (1) implies (2) is well-known, we prove the converse. Assume then that all final singularities in the reduction process are resonant.

We proceed by induction on the number $r\in \{0,1,2,...\}$ of blow-ups
in the reduction of singularities for the germ $\fa$.

\noindent{\bf Case 1}. $(r=0)$. This case is a consequence of Lemma~\ref{Lemma:irreduciblenondegenerate}.

\noindent{\bf Case 2}. $(r-1 \implies r)$.  Assume that the result is proved for foliation germs that admit a reduction of singularities with a number of blow-ups smaller than $r$. Suppose that the fixed germ $\fa$ admits a reduction of singularities consisting of $r$ blow-ups. Let $U$ be a small connected neighborhood of the origin where the leaves of $\fa$ are closed off the set of separatrices. Then we proceed as in the proof of Lemma~\ref{Lemma:zero} from where we import the notation.
Thus we perform a first blow-up $\sigma_1\colon \tilde U (1) \to U$ at the origin and obtain a lifted
foliation $\tilde \fa(1) = \sigma_1^*(\fa)$ with  (first) exceptional divisor $E(1)=\sigma_1^{-1}(0)$ consisting of
a single embedded invariant projective line in $\tilde U(1)$ (by hypothesis  the exceptional divisor is invariant by $\tilde \fa (1)$). Given a leaf $L$ of $\fa$ in $U$ we denote by $\tilde L (1) $ the lifting $\tilde L(1)=\sigma_1^{-1}(L)$ of $L$ to $\tilde U(1)$ by the map $\sigma_1\colon \tilde U (1) \to U$. From the proof of Lemma~\ref{Lemma:zero} we know that {\em the divisor $E(1)$ is $L$-stable with respect to $\tilde \fa(1)$.}

 By the Induction hypothesis, each singularity $\tilde p\in \sing(\tilde \fa)$ admits a holomorphic first integral say $\tilde f_{\tilde p}$ defined in $\tilde W_{\tilde p}$ if this last is small enough.
Now we analyze the holonomy of the leaf $E_0:=E \setminus \sing(\tilde \fa)$. Choose a regular point $\tilde q \in E_0$ and a small transverse disc $\Sigma$ to $E_0$ centered at $\tilde q$. The corresponding holonomy group representation will be denoted by $H:=\mbox{Hol}(\tilde \fa, \Sigma, \tilde q)\subset \mbox{Diff}(\Sigma, \tilde q)$. This holonomy group is finitely generated.  By the invariance of $E$ and a natural version of Lemma~\ref{Lemma:separatrix}, we know that  {\sl the  holonomy group $H$ is $L$-stable}. Since this holonomy group is generated by periodic maps, applying Lemma~\ref{Lemma:Lstablegroup}  we conclude that the holonomy group is finite. Since the virtual holonomy group preserves the leaves of the foliation, the arguments above already show that
{\sl the virtual holonomy group $H^{\virt}$ of the exceptional divisor is $L$-stable}. The problem is that we still do not know that the virtual holonomy group is resonant so that we cannot conclude that this virtual holonomy group is finite.  Nevertheless, from Lemma~\ref{Lemma:Lstablegroup} we obtain:

\begin{Claim}
\label{Claim:subgroupvirtual}
Any finitely generated resonant subgroup $H_0$ of the virtual holonomy group
$H^{\virt}$ is a finite group.
\end{Claim}

Let us then proceed as follows:
given the singularities $\{ \tilde p_1, ...,\tilde p_m\}=\mbox{sing}(\tilde \fa) \subset E$, by induction hypothesis each singularity admits a local holomorphic first integral. Thus, there are small discs $D_j\subset E$, centered at the $\tilde p_j$ and such that in a neighborhood $V_j$ of $\tilde p_j$ in the blow-up space
$\tilde{\mathbb C^2 _ 0}$ , of product type $V_j = D_ j\times \mathbb D_\epsilon$, we have a holomorphic first integral $g_j \colon  V_j \to \mathbb C$, with $g_j(\tilde p_j)=0$. Fix now a point $\tilde p_0\in E \setminus \sing(\tilde \fa)$. Since $E$ has the topology of the $2$-sphere, we may choose a simply-connected domain $A_j \subset E$ such that $A_j\cap \{\tilde p_0,\tilde p_1,...,\tilde p_m\}=\{\tilde p_0, \tilde p_j\}$, for every $j=1,...,m$. Since $A_j$ is simply-connected, we may extend the local holomorphic first integral $g_j$ to a holomorphic first integral $\tilde g_j$ for $\tilde \fa$ in a neighborhood $U_j$ of $D_j \cup A_j$, we may assume that $U_j$ contains $V_j$. Now, given a local transverse section $\Sigma_0$ centered at $\tilde p_0$ and contained in $U_j$, we may introduce the {\it invariance group} of the restriction $g_j ^0 := \tilde  g_j\big|_{\Sigma_0}$ as the group $\mbox{Inv}(g_j ^0):= \{ f \in \mbox{Diff}(\Sigma_0, p_0), \, g_j ^0\circ f= g_j ^0\}$. In other words, the invariance group of $g_j ^0$ is the group of germs of maps that preserve the fibers of $g_j ^0$. Clearly $\mbox{Inv}(g_j ^0)$ is a finite (resonant) group (\cite{mattei-moussu} Proposition 1.1. page 475). Let us now denote by $\mbox{Inv}(\tilde \fa, \Sigma_0)\subset \mbox{Diff}(\Sigma_0,p_0)$ the subgroup generated by the invariance groups $\mbox{Inv}(g_j ^0), j=1,...,m$. We call $\mbox{Inv}(\tilde \fa, \Sigma_0)$ the {\it global invariance group} of $\tilde \fa$ with respect to $(\Sigma_0,p_0)$.  Then, from the above we immediately obtain:
\begin{Claim}
\label{Claim:globalinvariance}
$\Inv(\tilde \fa, \Sigma_0)$ is a finite group.

\end{Claim}
\begin{proof}
Indeed, first notice that $\Inv(\tilde \fa, \Sigma_0)$     generated by periodic maps. Since $\Inv(\tilde \fa, \Sigma_0)$ preserves the leaves of $\tilde \fa$ we that $\mbox{Inv}(\tilde \fa, \Sigma_0)\subset \mbox{Hol}^{\virt} (\tilde \fa, \Sigma_0, p_0)$ and therefore $\Inv(\tilde \fa, \Sigma_0)$ is  a finite group.
\end{proof}

Notice that this global invariance group contains in a natural way the local invariance groups of the local first integrals $g_j$. Therefore, as observed in \cite{mattei-moussu}, once we have proved that the global invariance group $\Inv(\tilde \fa, \Sigma_0)$ is finite, together with the fact that the singularities in $E$ exhibit local holomorphic first integrals, we conclude as in \cite{mattei-moussu} that the foliation $\tilde \fa$ and therefore the foliation $\fa$ has a holomorphic first integral.

\end{proof}

\begin{proof} [Proof of Theorem~\ref{Theorem:main}]
By hypothesis $\fa$ is a $L$-stable foliation germ, with a representative defined in a  neighborhood $U$ of the origin.  By Lemma~\ref{Lemma:zero} $\fa$ is a generalized curve. By Lemma~\ref{Lemma:a} we may assume that the germ is not fully-resonant. The reduction of singularities of $\fa$ exhibits some non-resonant  singularity. 

\begin{Claim} In the above situation we have:
\begin{enumerate}
\item Given any separatrix $\Gamma$ through the origin, and a transverse disc $\Sigma$ meeting $\Gamma$ at a point $0\ne q=\Sigma \cap \Gamma$, the  virtual holonomy group $\mbox{Hol}^{\;\virt}(\fa,\Sigma, q)$ is an abelian linearizable circle type group containing a non-resonant  map.
\item In particular, there is a closed meromorphic 1-form $\Omega$ such that:
\begin{enumerate}

\item $\Omega$ is meromorphic with simple poles defined in a neighborhood $0 \in V \subset U$ of the singularity.

\item The foliation is defined by $\Omega=0$, off its polar set $(\Omega)_\infty \subset V$.

\item For any holomorphic coordinate $z \in \Sigma$ that linearizes the virtual holonomy group
$\Hol^{\virt}(\tilde \fa,\Sigma)$ we have $\Omega\big|_{\Sigma}= dz /z$.

\end{enumerate}

    \end{enumerate}
\end{Claim}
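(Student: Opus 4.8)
The plan is to establish item (1)—the structure of the virtual holonomy group—and then to derive item (2) by integrating the resulting transversely invariant form along the leaves.

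For (1), I would first argue, exactly as in the proof of Lemma~\ref{Lemma:a}, that since $\fa$ is $L$-stable and the virtual holonomy group preserves the leaves, the group $G:=\Hol^{\virt}(\fa,\Sigma,q)$ is itself $L$-stable. The initial point is that $G$ is abelian: given any $f,g\in G$, the subgroup they generate is finitely generated and $L$-stable, hence by Lemma~\ref{Lemma:Lstablegroup} it is finite cyclic or of circle type, and in either case abelian; thus $f$ and $g$ commute and $G$ is abelian. Next I must produce inside $G$ a real non-resonant element (an irrational rotation). By Lemma~\ref{Lemma:zero} the germ is a generalized curve, and by Lemma~\ref{Lemma:irreduciblenondegenerate} every final singularity of the reduction is linearizable of the form $xdy-\lambda ydx=0$ with $\lambda\in\mathbb{Q}_-\cup(\mathbb{R}\setminus\mathbb{Q})$. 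Since $\fa$ is not fully-resonant, at least one such singularity $\tilde p$ has $\lambda\in\mathbb{R}\setminus\mathbb{Q}$, and the local holonomy of the divisor component through $\tilde p$ is then an irrational rotation $R$, an honest real non-resonant linearizable map.

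I would transport $R$ to $\Sigma$ using the connectedness of the exceptional divisor: composing the local holonomy and corner-transition maps along a path in $E$ joining a section near $\tilde p$ to $\Sigma$ (crossing over, at the end, at the point where the strict transform of $\Gamma$ meets $E$) yields a germ of biholomorphism $T$ between transverse sections which sends leaves to leaves. Then $T\circ R\circ T^{-1}$ preserves the leaves on $\Sigma$, hence lies in $G=\Hol^{\virt}(\fa,\Sigma,q)$, and being analytically conjugate to $R$ it is again a real non-resonant linearizable map. Thus $G$ is abelian and contains an irrational rotation; linearizing that rotation, every element of $G$ commutes with it and must be linear in the same coordinate $z$, since $a_n(e^{2\pi i n\lambda}-e^{2\pi i\lambda})=0$ forces $a_n=0$ for $n\neq 1$. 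Hence $G$ is an abelian linearizable circle type group containing a non-resonant map, which is exactly (1).

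For (2), working upstairs on $\tilde\fa$ and pushing forward by $\sigma$ at the end, I take the coordinate $z\in\Sigma$ that linearizes $G$, so every element acts as $z\mapsto cz$ and the transverse form $dz/z$ is $G$-invariant. Because $G$ is precisely the group of leaf-preserving germs on $\Sigma$, this invariance lets me spread $dz/z$ over a neighborhood of the divisor: on each flow box I pull $dz/z$ back by the holonomy projection onto $\Sigma$ along the leaves, and invariance guarantees these local closed meromorphic forms agree on overlaps. Near each linearizable singularity $xdy-\lambda ydx=0$ the resulting form is, up to a multiplicative constant, $\dfrac{dy}{y}-\lambda\dfrac{dx}{x}$, which is closed with simple poles along the invariant axes and has the real eigenvalue as residue; matching the constants across corners is exactly the compatibility afforded by the abelian linearizable holonomy. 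Patching these forms, extending across the corner and singular points by Hartogs, and pushing forward by $\sigma$ produces a closed meromorphic $1$-form $\Omega$ on a neighborhood $V\subset U$ with simple poles, defining $\fa$ off $(\Omega)_\infty$, and satisfying $\Omega\big|_\Sigma=dz/z$, which is (2).

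The main obstacle is the transport step in the second paragraph: one must verify that the irrational rotation of a possibly far-away non-resonant singularity genuinely survives, as a non-resonant element, the composition of holonomy and corner-transition maps along the connected divisor and lands in the virtual holonomy of the chosen separatrix—this is where connectedness of $E$, the generalized-curve hypothesis, and the explicit linearizable local models are all used at once. The global construction of $\Omega$ is then routine once the invariant transverse form is in place, modulo the standard check that the simple-pole residues glue coherently across corners.
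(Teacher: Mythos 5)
Your overall strategy coincides with the paper's in outline (show the virtual holonomy is $L$-stable, abelian, and contains a real non-resonant linearizable element, then spread the invariant transverse form $dz/z$ and glue it to the local logarithmic forms at the reduced singularities), but the step you yourself flag as the main obstacle is a genuine gap, and it is precisely the step the paper's argument is structured to avoid. You pass to the \emph{full} reduction of singularities and propose to transport an irrational rotation $R$ from a far-away component of $E$ to $\Sigma$ by ``composing the local holonomy and corner-transition maps along a path in $E$''. But at a corner $D_i\cap D_j$ the local model is $xdy-\lambda ydx=0$ with $\lambda\in\mathbb Q_-\cup(\mathbb R\setminus\mathbb Q)$, and the correspondence between a section transverse to $D_i$ and one transverse to $D_j$ is the Dulac map of type $z\mapsto c\,z^{-\lambda}$: it is multivalued for $\lambda$ irrational and ramified for $\lambda$ rational, so there is \emph{no} germ of biholomorphism $T$ between the two sections sending leaves to leaves. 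Hence $T\circ R\circ T^{-1}$ is not defined and you cannot conclude that the conjugate lands in $\Hol^{\virt}(\fa,\Sigma,q)$ this way. What is true (and is the content of the corresponding step in \cite{C-LN-S}) is the weaker statement that a non-resonant linearizable element in the virtual holonomy on one side of such a corner \emph{induces} one on the other side; that requires a separate argument and is not a formal conjugation.

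The paper circumvents this entirely by inducting on the number of blow-ups, one blow-up at a time: after a single blow-up $E(1)$ is one embedded projective line, so $E(1)\setminus\sing(\tilde\fa(1))$ is a single leaf and transport happens only via honest holonomy maps $h_\alpha$ along paths inside that leaf, which do give isomorphisms $\alpha^*$ of virtual holonomy groups; the unreduced singularities on $E(1)$ are handled by the induction hypothesis (their virtual holonomy relative to the branch of $E(1)$ already contains a non-resonant map, and that group embeds into the virtual holonomy of the leaf $E(1)^*$), and the only ``crossing'' ever needed is the final descent from $\tilde\Gamma$ to $\Gamma$ through the blow-down plus Riemann extension. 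A second, smaller point: your appeal to Lemma~\ref{Lemma:irreduciblenondegenerate} at the final singularities of the reduction presupposes that each such germ is $L$-stable; this is not automatic from the $L$-stability of $\fa$ at the origin and has to be propagated through the blow-ups, as is done in the proof of Lemma~\ref{Lemma:zero}. Your construction of $\Omega$ in item (2) matches the paper's and is fine once (1) is secured, but as written the proof of (1) does not close.
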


\begin{proof}[Proof of the claim]

We proceed by induction on the number $r\in \{0,1,2,...\}$ of blowing-ups
in the reduction of singularities for the germ $\fa$.

\noindent{\bf Case 1}. ($r=0$). In this case the singularity is already irreducible and non-resonant. The result follows from Lemma~\ref{Lemma:irreduciblenondegenerate}.

\noindent{\bf Case 2} (Induction step). Assume that the result is proved for (not fully-resonant) foliation germs that admit a reduction
of singularities with a number of blowing-ups less greater than or equal to $r$. Suppose that the fixed
germ $\fa$ admits a reduction of singularities consisting of $r+1$ blowing-ups. We fix a small  neighborhood $U\subset \mathbb C^2$ of the origin, such that $\mbox{Sep}(\fa,U)$ is analytic of pure dimension one.
For simplicity of the notation we shall denote $\fa(U)$ by $\fa$.
Write $\mbox{sing}(\tilde(\fa))=\{\tilde p_1,...,\tilde p_\ell\}$.
We know from the proof of Lemma~\ref{Lemma:zero} that the germ of foliation induced by $\tilde \fa(1)$ at each singular point $\tilde p_j$ is $L$-stable. If the singularity $\tilde p_j$ is fully-resonant then it admits a holomorphic first integral by Lemma~\ref{Lemma:a}. Thus, in this case, there is a closed meromorphic one-form $\tilde \Omega_j$ with simple poles defining $\tilde \fa(1)$ in a neighborhood $\tilde W_j$ of $\tilde p_j$. Assume now that $\tilde p_j$ is not fully-resonant. Then,  by the Induction hypothesis we have: 
\begin{enumerate}
\item The  reduction of singularities of $\tilde p_j$ only produces linearizable singularities, with real quotient of eigenvalues.
\item Given any separatrix $\tilde \Gamma(j)$ through the $\tilde p_j$, and a transverse disc $\tilde \Sigma_j$ meeting $\tilde \Gamma(j)$ at a point $\tilde p_j\ne \tilde q_j=\tilde \Sigma_j \cap \tilde \Gamma(j)$, the  virtual holonomy group $\mbox{Hol}^{\virt}(\tilde \fa(1),\tilde \Sigma_j, \tilde q_j)$ is an abelian linearizable circle type group containing a non-resonant map.
\item There is a closed meromorphic one-form $\tilde \Omega_j$  defining $\tilde \fa(1)$ in a neighborhood $\tilde W_j$ of $\tilde p_j$ and such that:
    \begin{enumerate}

\item $\tilde \Omega$ has simple poles in $\tilde W_j$.
\item $\tilde \Omega$ defines $\tilde \fa(1)$ in $\tilde W_j\setminus (\tilde \Omega_j)_\infty$.
\item Given a point $\tilde q_j \in E(1)\setminus\sing(\tilde \fa(1))$, close to the singularity $\tilde p_j$, and a transverse disc $\Sigma_{\tilde q_j}$ to $E(1)$,  and  any holomorphic coordinate $z_j \in \Sigma_{\tilde q_j}$ that linearizes the virtual holonomy group $\mbox{Hol}^{\virt}(\tilde \fa(1)_{\tilde p_j},\Sigma_{\tilde q_j})$, of the germ of $\tilde \fa(1)$ at the point $\tilde p_j$,  then $\tilde \Omega_j\big|_{\Sigma_{\tilde q_j}}= dz_j /z_j$.
    \end{enumerate}
\end{enumerate}
Now, also from the proof of Lemma~\ref{Lemma:zero} we know that the {\em the divisor $E(1)$ is $L$-stable with respect to $\tilde \fa(1)$.} Choose now a point
$\tilde q \in E(1)\setminus \sing(\tilde \fa(1))$ and a local transverse disc $\Sigma_{\tilde q} \ni \tilde q$. Then the virtual holonomy group
$\mbox{Hol}^{\virt} (\tilde \fa(1), \Sigma_{\tilde q}, \tilde q)$ is $L$-stable.
Because some singularity $\tilde p_j$ is not fully resonant, this group contains some non-resonant element, and therefore it is analytically linearizable. Once we have this we can proceed as in \cite{C-LN-S}.
Indeed, since $E(1)$ is invariant, the non-resonant (linearizable) element in the virtual holonomy of a separatrix through $\tilde p$  induces a non-resonant linearizable element on the virtual holonomy of the separatrix through $\tilde q$  induced by the exceptional divisor $E(1)$.
This is done as follows. Given two points $\tilde q$ and $\tilde q_1$, close to $\tilde p$ and $\tilde p_1$ respectively, and transverse discs $\Sigma$ and $\Sigma_1$ meeting $E(1)$ at these points respectively, we can choose a simple path $\alpha\colon [0,1] \to E(1) \setminus \sing(\tilde \fa (1))$ from $\tilde q$ to $\tilde q_1$. The holonomy map
$h_\alpha\colon (\Sigma,\tilde q) \to (\Sigma_1, \tilde q_1)$ associated to the path $\alpha$ (recall that $E(1)\setminus \sing(\tilde \fa (1))$ is a leaf of $\tilde \fa (1)$),
induces a natural morphism for the virtual holonomy groups
$\alpha ^* \colon \mbox{Hol}^{\virt} (\tilde \fa (1), \Sigma_1, \tilde q_1) \to \mbox{Hol}^{\virt}(\tilde \fa (1), \Sigma, q)$, by $\alpha^*:  h \mapsto h_{\alpha}^{-1} \circ h \circ h_{\alpha}$. Since $h_{\alpha^{-1}}=(h_{\alpha})^{-1}$ in terms of holonomy maps, we conclude that the above morphism is actually an isomorphism between the virtual holonomy groups. Thus, also the virtual holonomy group
associated to the separatrix $\tilde\Gamma$ of $\tilde \fa (1)$ through $\tilde p_1$
contains some non-resonant linearizable  map.

 The fact that the virtual holonomy of $E(1)$ is analytically linearizable gives a closed meromorphic one-form $\tilde \Omega(1)$ defined in a neighborhood $\tilde W(1)^*$ of the leaf $E(1)^*:= E(1)\setminus \sing(\tilde \fa(1))$ with the following property:
given a transverse disc $\Sigma$ to $E(1)^*$ with a holomorphic coordinate $z \in \Sigma$ that linearizes the virtual holonomy group
$\Hol^{\virt}(\tilde \fa(1),\Sigma)$, then $\tilde \Omega(1)\big|_{\Sigma}= dz /z$. Since the virtual holonomy group $\Hol^{\virt}(\tilde \fa(1),\Sigma)$ contains in a natural way the virtual holonomy group
$\mbox{Hol}^{\virt}(\tilde \fa(1)_{\tilde p_j},\Sigma_{\tilde q_j})$ this implies that a coordinate that linearizes the virtual holonomy group $\Hol^{\virt}(\tilde \fa(1),\Sigma)$ also linearizes the virtual holonomy group  $\mbox{Hol}^{\virt}(\tilde \fa(1)_{\tilde p_j},\Sigma_{\tilde q_j})$. This shows that $\tilde \Omega(1)$ and $\tilde \Omega_j$ coincide in a neighborhood of $\tilde q_j$, and therefore $\tilde \Omega(1)$ extends to a neighborhood of $\tilde p_j$ as $\tilde \Omega_j$. Thus we have constructed the form $\tilde \Omega(1)$ in a neighborhood $\tilde W(1)$ of $E(1)$ in $\tilde U(1)\subset \tilde {\mathbb C_0^2}$. Now we  project this form into a one-form $\Omega= (\sigma(1)^{-1})^*(\tilde \Omega(1))$ in $\sigma(\tilde U(1))\setminus\{0\}$, and extend $\Omega$ to the origin by Hartogs' theorem. Now we are about to complete the proof of the claim. For this, consider any separatrix $\Gamma$ of $\fa$ through the origin. Since the projective line $E(1)$ in the first blow-up is invariant, the lift $\tilde \Gamma$ is the separatrix of some singularity $\tilde p_1$ of $\tilde \fa (1)$. If $\tilde p_1$ is not fully-resonant, then by the above, we conclude that the virtual holonomy group associated to this separatrix $\Gamma$ contains a non-resonant map.  Assume now that $\tilde p_1$ is fully-resonant. In this case we have to show the existence of a non-resonant map  in the virtual holonomy of the separatrix $\tilde \Gamma$ by ``importing`` this map from some virtual holonomy of other singularity. Indeed, by hypothesis, some singularity $\tilde p$ in the first blow-up is not fully-resonant. Therefore, its virtual holonomy relatively to the separatrix contained in the projective line, contains  a non-resonant map.  Recall that the blow-up is a diffeomorphism off the origin and off the exceptional divisor, so that the maps in the virtual holonomy of $\tilde \Gamma$ induce maps in the disc $\Sigma$ transverse to $\Gamma$ in $\mathbb C^2$, but which are defined only in the punctured disc, i.e., off the origin. Nevertheless, since these projected maps are one-to-one, the classical Riemann extension theorem for bounded holomorphic maps shows that indeed such maps induce germs of diffeomorphisms defined in the disc $\Sigma$. These diffeomorphisms are the virtual holonomy maps of the separatrix $\Gamma$ of $\tilde \fa (1)$ evaluated at the transverse section $\Sigma$. Hence,  by projecting the maps in $\mbox{Hol}^{\virt}(\tilde \fa (1), \Sigma, \tilde q)$ we obtain non-resonant maps in this virtual holonomy group as stated. Now the Induction Principle applies to finish the proof of the claim.
\end{proof}

By the above claim we can construct a simple poles closed meromorphic one-form that defines $\fa$ in a neighborhood of the origin. By the Integration lemma (\cite{scarduaDynI}) $\omega$ is a logarithmic one-form $\omega=\sum\limits_{j=1}^r \lambda_j df_j/f_j$, where $\lambda_j\in \mathbb C\setminus \{0\}$. Denote by  $\Gamma_j$ the separatrix given by $\{f_j=0\}$. Given a coordinate $z_1\in \Sigma_1$, a transverse disc to $\Gamma_1$ at $p_1 \in \Gamma_1\setminus \{0\}$,  according to \cite{C-LN-S} the maps $z_1\mapsto \lambda_j /\lambda_1 z_1$ belong to the virtual holonomy group $\mbox{Hol}^{\virt}(\fa, \Sigma_1,p_1)$. Therefore,  we conclude that $\lambda _ j / \lambda_1 \in \mathbb R, \forall j$ and in the non-resonant case we must have  $\lambda _j / \lambda_1 \in \mathbb R \setminus \mathbb Q$ for some $j$. This ends the proof of Theorem~\ref{Theorem:main}.
 \end{proof}

 \begin{proof}[Proof of Corollary~\ref{Corollary:closedoff}]
From the last part of the proof above we know that the maps
 $z_1\mapsto \lambda_j/ \lambda_1 z_1$ belong to the virtual holonomy group $\mbox{Hol}^{\virt}(\fa, \Sigma_1,p_1)$. By hypothesis $\fa$ has a leaf which is closed off the set of separatrices, thus this virtual holonomy group must have a pseudo-orbit which is closed off the origin and therefore it must be resonant. Hence, still under the notation of the proof of Theorem~\ref{Theorem:main} we must have $\lambda _j / \lambda_1 \in \mathbb Q$ for all $j$. Thus $\fa$ admits a holomorphic first integral. This proves Corollary~\ref{Corollary:closedoff}.  Similar arguments prove the case where there is an open leaf which is  not recurrent. \end{proof}

\section{Global framework}
In this section  all manifolds are assumed to be connected and the foliations are non-singular. Let $\fa$ be a
 holomorphic foliation on a complex manifold $M$.

\begin{Definition}
{\rm A leaf $L_0 \in \fa$ is {\it $L$-stable} if
given any neighborhood $W$ of $L_0$ in $M$,  there is a neighborhood $L_0\subset V \subset W$
such that $\mbox{Sat}_\fa(V) \subset W$.

}
\end{Definition}

\begin{Theorem}
\label{Theorem:global}
Let $\fa$ be a codimension one holomorphic foliation on a complex manifold $M$. Let $L_0 \in \fa$ be a  compact leaf which is $L$-stable. Then
we have the following possibilities:
\begin{itemize}

\item[{\rm(i)}] $L_0$ has a finite holonomy.
\item[{\rm(ii)}] $L_0$ has a circle type  holonomy.
\end{itemize}
In the first case if $M$ is compact then all leaves are compact with finite holonomy group. In the second case, there is a $\fa$-saturated neighborhood of $L_0$ where $\fa$ is given by a closed meromorphic one-form $\Omega$ with polar set of order one, $(\Omega)_\infty=L_0$. In this case, if $M\setminus L_0$ is Stein, then $\Omega$ extends to $M$, closed meromorphic, with simple poles, $(\Omega)_\infty= L_0$. In particular, all other leaves of $\fa$ have trivial holonomy.
\end{Theorem}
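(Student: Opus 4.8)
<br>

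The plan is to reduce the global statement to the already-established local theory by analyzing the holonomy of the compact leaf $L_0$ and then globalizing. First I would fix a base point $p \in L_0$ and a transverse disc $\Sigma_p$, and study the holonomy group $\mbox{Hol}(\fa, L_0, \Sigma_p, p) \subset \Diff(\Sigma_p, p) \cong \Diff(\mathbb C, 0)$. Since $L_0$ is compact, $\pi_1(L_0)$ is finitely generated, hence so is this holonomy group. The key observation is that $L$-stability of the leaf $L_0$ forces $L$-stability of its holonomy group in the sense of Definition~\ref{Definition:Lstablegroup}: a neighborhood basis of $L_0$ inside $W$ with saturate contained in $W$ restricts, on the transverse disc $\Sigma_p$, to a neighborhood basis of $0$ whose pseudo-orbits stay controlled. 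This is exactly the transverse reformulation carried out in the local setting (Lemma~\ref{Lemma:separatrix}), and I would invoke that mechanism. Once $L$-stability of the holonomy group is in hand, Lemma~\ref{Lemma:Lstablegroup} applies directly and yields the dichotomy: the finitely generated $L$-stable group $\mbox{Hol}(\fa, L_0, \Sigma_p, p)$ is either finite cyclic (case (i)) or of circle type (case (ii)). This establishes the two listed possibilities.

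For the first consequence, suppose $M$ is compact and we are in case (i), so $L_0$ has finite holonomy. Here I would invoke the complex-analytic version of \emph{Reeb's local stability theorem}: a compact leaf with finite holonomy admits a saturated neighborhood consisting of compact leaves, each with finite holonomy, and the holonomy covering of $L_0$ is finite. Thus the set of leaves that are compact with finite holonomy is open. It is also closed, by the same local stability argument applied at boundary leaves together with compactness of $M$. Since $M$ is connected, this set is all of $M$, giving the assertion that every leaf is compact with finite holonomy group.

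For the second consequence I treat case (ii), where $\mbox{Hol}(\fa, L_0, \Sigma_p, p)$ is of circle type, hence abelian, analytically linearizable, and contains an irrational rotation. Choosing the linearizing coordinate $z$ on $\Sigma_p$, every holonomy map acts as $z \mapsto c z$; the invariant transverse object is the logarithmic form $dz/z$, which is preserved under the holonomy precisely because the group is abelian and linear. Following the construction used in the proof of Theorem~\ref{Theorem:main} (the argument producing $\Omega$ from a linearizable virtual holonomy, as in \cite{C-LN-S}), I would patch the local expressions $dz/z$ along $L_0$ using the holonomy identifications to obtain a closed meromorphic one-form $\Omega$ defined on a saturated neighborhood of $L_0$, with $\fa$ given by $\Omega = 0$ and with polar divisor of order one equal to $L_0$. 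The residue of $\Omega$ along $L_0$ is constant (equal to $1$ in the chosen normalization) because $L_0$ is connected and the linearizing multipliers are compatible around loops in $L_0$.

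Finally, for the extension statement, assume $M \setminus L_0$ is Stein. The form $\Omega$ is already defined and closed on a punctured saturated neighborhood of $L_0$; on $M \setminus L_0$ it defines a holomorphic (nonsingular) foliation, and I would extend $\Omega$ across $M \setminus L_0$ as a closed holomorphic one-form using the Stein hypothesis, which guarantees vanishing of the relevant cohomological obstruction to globalizing a closed form defined locally with prescribed periods. Matching this extension with the local $\Omega$ near $L_0$ produces a global closed meromorphic $\Omega$ on $M$ with simple poles and $(\Omega)_\infty = L_0$. Since $\Omega$ is then holomorphic and nonvanishing on $M \setminus L_0$, every leaf off $L_0$ is a level set of a local primitive of $\Omega$ and therefore has trivial holonomy. \textbf{The main obstacle} I expect is the globalization in the circle-type case: carefully verifying that the local forms $dz/z$ glue to a single-valued closed meromorphic $\Omega$ around the (possibly topologically nontrivial) compact leaf $L_0$, and then controlling the Stein extension across $M \setminus L_0$ so that no extra poles or periods are introduced. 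The abelianness and linearizability supplied by Lemma~\ref{Lemma:Lstablegroup} are exactly what make the residue constant and the gluing consistent, so the real work is cohomological bookkeeping rather than new dynamics.
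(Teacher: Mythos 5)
Your overall architecture coincides with the paper's: you show the (virtual) holonomy group of $L_0$ is $L$-stable by restricting saturated neighborhoods to a transversal (this is Lemma~\ref{Lemma:holonomystable}), apply Lemma~\ref{Lemma:Lstablegroup} to get the finite/circle-type dichotomy, and in the circle-type case spread $dz/z$ in a linearizing coordinate as in \cite{C-LN-S} to build $\Omega$ on a saturated neighborhood. Those steps are fine. The genuine gap is in case (i) with $M$ compact: your open-and-closed argument does not work as stated. Reeb local stability does give openness of the set of compact leaves with finite holonomy, but closedness is precisely the hard content of the conclusion --- a leaf adherent to a family of compact leaves need not be compact, and even when it is, its holonomy need not be finite; this is exactly where the real codimension-one theory fails without extra hypotheses (Reeb's global stability needs finiteness of $\pi_1(L_0)$, and Thurston's version needs $H^1(L_0,\mathbb R)=0$). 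The paper instead invokes Brunella's global stability theorem for transversely holomorphic foliations \cite{brunellastability}, which is the nontrivial input needed here; ``the same local stability argument applied at boundary leaves'' does not substitute for it.

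On the Stein extension step your mechanism is also off, though less fatally. The relevant tool is not a cohomological vanishing for closed forms with prescribed periods but a Levi/Hartogs-type extension: with $M$ compact, $K:=M\setminus\mathcal W$ is a compact subset of the Stein manifold $M\setminus L_0$, and the meromorphic one-form $\Omega$, already defined on $M\setminus K=\mathcal W$, extends meromorphically across $K$ by the Levi extension theorem. You flag but do not resolve the question of extra poles; the paper's argument is that any additional polar component would be an $\fa$-invariant compact analytic hypersurface contained in $M\setminus L_0$, which a Stein manifold cannot contain, so $(\Omega)_\infty=L_0$ persists after extension. Your closing observation that leaves off $L_0$ have trivial holonomy because they are level sets of local primitives of a holomorphic closed form is correct and agrees with the paper.
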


\subsection{$L$-stable compact leaves}

Let us now give the proof of
Theorem~\ref{Theorem:global}. Let $\fa$ be a codimension one holomorphic foliation on a complex manifold $M$. Let $L_0 \in \fa$ be a  compact leaf which is $L$-stable.
Given a point $p\in L_0$ and a small transverse disc $\Sigma$ centered at $p$ we claim:
\begin{Lemma}
\label{Lemma:holonomystable} The virtual holonomy $\mbox{Hol}^{\;\virt}(\fa,L_0,\Sigma,p)$ of the leaf $L_0$, is $L$-stable.
\end{Lemma}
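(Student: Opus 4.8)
The plan is to establish $L$-stability of the virtual holonomy group $\mathrm{Hol}^{\virt}(\fa, L_0, \Sigma, p)$ directly from the $L$-stability of the compact leaf $L_0$, by translating the containment of saturations into the dynamical statement about pseudo-orbits in Definition~\ref{Definition:Lstablegroup}. First I would fix a parametrization $z\colon (\Sigma, p)\to (\mathbb D, 0)$ identifying $\Sigma$ with a disc in $\mathbb C$ centered at $0$, so that $\mathrm{Hol}^{\virt}(\fa, L_0, \Sigma, p)$ becomes a subgroup of $\Diff(\mathbb C, 0)$. The key geometric observation is that, since $L_0$ is compact and $\fa$ is a codimension-one \emph{nonsingular} foliation in a neighborhood of $L_0$, a neighborhood of $L_0$ is foliated-homeomorphic to a quotient of $\tilde L_0\times \Sigma$ (a suspension-type local model over $L_0$), and the leaves meeting $\Sigma$ near $p$ correspond precisely to the virtual-holonomy pseudo-orbits of points of $\Sigma$. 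Thus saturating a set contained in a small tube around $L_0$ amounts, at the transverse level, to taking the pseudo-orbit under the action of $\mathrm{Hol}^{\virt}$.

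Next I would make the correspondence between transverse neighborhoods and tubular neighborhoods precise. Given a neighborhood $0\in U'\subset \mathbb C$ in the sense of Definition~\ref{Definition:Lstablegroup}, I would produce an open neighborhood $W$ of $L_0$ in $M$ whose intersection with $\Sigma$ is contained in $U'$, shrinking $W$ so that $W$ is contained in the fixed tubular model around $L_0$; the compactness of $L_0$ is essential here to guarantee a uniform transverse size of the tube. Applying $L$-stability of the leaf $L_0$ to this $W$ yields an intermediate neighborhood $L_0\subset V\subset W$ with $\mathrm{Sat}_\fa(V)\subset W$. I would then set $V' := z(V\cap \Sigma)$, an open neighborhood of $0$ in $\mathbb C$, and check that $V'\subset U'$. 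The crucial step is verifying that for any $q\in V'$ the virtual-holonomy pseudo-orbit of $q$ stays inside $U'$: a point in this pseudo-orbit lies on the same leaf $L_{q}$ as $q$ (by the defining property of the virtual holonomy group, which preserves leaves), hence lies in $\mathrm{Sat}_\fa(V)\subset W$, and therefore its image under $z$ lies in $z(W\cap\Sigma)\subset U'$. This gives exactly the $L$-stability condition for the group.

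The main obstacle I anticipate is the bookkeeping needed to show that \emph{every} element of the virtual-holonomy pseudo-orbit is genuinely realized by transport inside $W$ along leaves of $\fa$ that remain in the tubular neighborhood, rather than by some global excursion of the leaf far from $L_0$. Concretely, a virtual-holonomy map need not be a holonomy map along a loop in $L_0$; it is only required to preserve leaves, so one must argue that its graph is nonetheless detected inside the local product model over $L_0$, i.e. that the relevant leaf intersections with $\Sigma$ all occur within $W\cap\Sigma$. I would handle this by noting that the definition of $\mathrm{Hol}^{\virt}$ only records identifications $L_{z}=L_{f(z)}$ of leaves through $\Sigma$, so that $q$ and every point of its pseudo-orbit lie on one and the same leaf; since that whole leaf-piece issuing from $V\cap\Sigma$ is contained in $\mathrm{Sat}_\fa(V)\subset W$, its returns to $\Sigma$ are automatically confined to $W\cap\Sigma\subset z^{-1}(U')$. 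Once this confinement is in place, the $L$-stability of $\mathrm{Hol}^{\virt}(\fa, L_0, \Sigma, p)$ follows, and the same argument, restricted to the genuine holonomy representation, simultaneously yields Lemma~\ref{Lemma:separatrix} as claimed in the text.
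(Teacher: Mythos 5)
Your proposal is correct and follows essentially the same route as the paper: both arguments reduce the group-level $L$-stability to the observation that a virtual-holonomy pseudo-orbit of $q\in V\cap\Sigma$ is contained in $L_q\cap\Sigma\subset \mbox{Sat}_\fa(V)\cap\Sigma\subset W\cap\Sigma$, which is forced into the prescribed transverse neighborhood. The only difference is bookkeeping: the paper manufactures $W$ as $\mbox{Sat}_\fa(U)$ for a transverse neighborhood $U\subset\Sigma$ and then normalizes so that $W\cap\Sigma=U$, whereas you take a small tubular neighborhood with $W\cap\Sigma\subset U'$ (which works, provided one first shrinks $\Sigma$ so that $L_0\cap\Sigma=\{p\}$).
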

\begin{proof}
Indeed, given a small connected neighborhood $p \in U\subset \Sigma$ we consider the saturation $W:=\mbox{Sat}_\fa(U)\subset M$. This is an open neighborhood of $L_0$ in $M$. Denote by $U_p\subset W \cap \Sigma$ the connected component of $W \cap \Sigma$ that contains the point $p$. Because $U$ is connected, we have $U_p \subset U$. Then we put $U_1:=\mbox{Sat}_\fa(U_p)\cap \Sigma$. We observe that $\mbox{Sat}_\fa(U_1) \cap \Sigma \subset \mbox{Sat}_\fa(U_p) \cap \Sigma=U_1$, because $\mbox{Sat}_\fa(U_p)$ is already saturated. On the other hand, clearly $\mbox{Sat}_\fa(U_1) \cap \Sigma \supset U_1\cap \Sigma= U_1$, so that
 \[
 \mbox{Sat}_\fa(U_1) \cap \Sigma = U_1
 \]
 Thus, for sake of simplicity we will assume that $U_p$ is connected and
 $W=\mbox{Sat}_\fa(U_p)$ is such that $W\cap \Sigma= U_p$. Finally, since $U_p \subset U$ we may consider that $U_p=U$, so that $\mbox{Sat}_\fa(U) \cap \Sigma=U$.

 Since by hypothesis $L_0$ is $L$-stable, there is a neighborhood $L_0\subset V \subset W$
such that $\mbox{Sat}_\fa(V) \subset W$. As above we may assume that
 $V_p:=V\cap \Sigma$ is connected and   that
$V=\mbox{Sat}_\fa(V_p)$. In particular, we have $\mbox{Sat}_\fa(V)=V$, i.e., $V$ is saturated.  Since $V\subset W=\mbox{Sat}_\fa(U)$ we have $V\cap \Sigma \subset  W \cap \Sigma=U$.

\begin{Claim}
The pseudo-orbits of the points $x\in V_p$ under the virtual holonomy group $G:=\mbox{Hol}^{\;\virt}(\fa, L_0, \Sigma, p)$ remain in $U$.

\end{Claim}
\begin{proof}
Indeed, given $x \in V_p$ the $G$-pseudo-orbit of $x$, $G(x)$ is contained in the saturation  $\mbox{Sat}_{\fa}(x)$ of the point $x$,  and therefore $G(x)\subset \mbox{Sat}_\fa(V_p)\cap \Sigma = V_p \subset U$.
\end{proof}
This shows that the virtual holonomy group $\mbox{Hol}^{\virt}(\fa,L_0,\Sigma,p)$ is $L$-stable. \end{proof}

According to Lemma~\ref{Lemma:Lstablegroup} the (virtual) holonomy group $\Hol(\fa, L_0)$ is finite or linearizable containing some irrational rotation (circle type).
This already proves the first part of the theorem.
Let us consider case (i) and $M$ compact. Then by the stability theorem
\cite{brunellastability} the foliation is compact with stable leaves.
Assume now that we are in case (ii). Then according to the main construction in \cite{C-LN-S}  there is a closed meromorphic one-form $\Omega$ defined in an invariant neighborhood $\mathcal W$ of $L_0$, with polar set $(\Omega)_\infty=L_0$ of order one and defining $\fa$ in $\mathcal W\setminus L_0$. If $M$ is compact and $M\setminus L_0$ is a Stein manifold then $K:=M\setminus \mathcal W$ is a compact subset of $M\setminus L_0$ and $\Omega$ is meromorphic in $M\setminus K=\mathcal W$, so that by
Levi extension theorem \cite{C-LN-S} this form extends as a meromorphic one-form $\ov{ \Omega}$ to $M$. Because a Stein manifold cannot contain compact analytic subsets of positive dimension, this implies that the poles of $\ov{\Omega}$ are already contained in $\mathcal W$. Thus, except for $L_0$,  the leaves of $\fa$ have trivial holonomy group. Theorem~\ref{Theorem:global} is now proved. \qed

\bibliographystyle{amsalpha}

\end{document}